\definecolor{shadecolor}{gray}{0.9}
\theoremstyle{plain}
\newtheorem{theorem}{Theorem}[section]
\newtheorem{corollary}[theorem]{Corollary}
\newtheorem{lemma}[theorem]{Lemma}
\newtheorem{proposition}[theorem]{Proposition}
\theoremstyle{definition}
\newtheorem{definition}[theorem]{Definition}
\theoremstyle{remark}
\newtheorem{remark}[theorem]{Remark}
\numberwithin{equation}{section}
\newcommand{\C}{\mathbb{C}}
\newcommand{\R}{\mathbb{R}}
\newcommand{\Z}{\mathbb{Z}}
\newcommand{\N}{\mathbb{N}}
\newcommand{\tm}{\subseteq}
\newcommand{\supp}{\mathrm{supp}}
\newcommand{\id}{\mathrm{id}}
\renewcommand{\Re}{\mathrm{Re}}
\renewcommand{\d}{\mathrm{d}}
\newcommand{\dif}{\; \mathrm{d}}
\newcommand{\nrm}[1]{\left\lVert#1\right\rVert}
\newcommand{\abs}[1]{\ensuremath{\left\vert#1\right\vert}}
\newcommand{\te}{\textrm}
\newcommand{\m}{\cdot} 
\renewcommand{\tilde}[1]{\widetilde{#1}}
\renewcommand{\set}[1]{\left\{ #1 \right\}}
\renewcommand\labelenumi{(\roman{enumi})}
\renewcommand\theenumi\labelenumi
\title[Dunkl convolution and elliptic regularity for Dunkl operators]
{Dunkl convolution and elliptic regularity for Dunkl operators}
\author{Dominik Brennecken} 
\address{Institut f\"ur Mathematik, Universit\"at Paderborn, Warburger Str. 100, D-33098 Paderborn, Germany}
\email{bdominik@math.upb.de}
\subjclass[2000]{Primary 33C67; Secondary 33C52}
\keywords{Dunkl theory, Dunkl convolution, generalized translation, elliptic regularity}
\begin{document}
\date{\today}

\begin{abstract}
We discuss in which cases the Dunkl convolution $u*_kv$ of distributions $u,v$, possibly both with non-compact support, can be defined and study its analytic properties.  We prove results on the (singular-)support of Dunkl convolutions. Based on this, we are able to prove a theorem on elliptic regularity for a certain class of Dunkl operators, called elliptic Dunkl operators. Finally, for the root system $A_{n-1}$ we consider the Riesz distributions $(R_\alpha)_{\alpha \in \C}$ and prove that their Dunkl convolution exists and that $R_\alpha*_kR_\beta = R_{\alpha+\beta}$ holds.
\end{abstract}

\maketitle

\section{Introduction}
The analysis of linear differential operators has a wide range and in particular the theory of elliptic operators has a long history, see for instance \cite{FJ98,H03}. Consider a Riemannian manifold $M$ and a linear partial differential operator $D$ on $M$. If $D$ is an elliptic operator, such as the Laplace-Beltrami operator, it is well known that $D$ is hypoelliptic, which means
$$\mathrm{singsupp}(Du)=\mathrm{singsupp}\, u$$
for any distribution $u$ on $M$, where $\mathrm{singsupp}$ denotes the singular support. Furthermore, for an elliptic operator $D$ of order $m$, various regularity results about the action on Sobolev spaces are known, such as
$$Du \in H^s_{loc}(M) \te{ if and only if } u \in H^{s+m}_{loc}(M),$$
where $H^s_{loc}(M)$ is the local Sobolev space of order $s$ on $M$. The aim of this paper is to describe and prove such results for rational Dunkl operators. To point out why one might expect that this is true, we explain the connection between the analysis of Dunkl operators and radial analysis on Riemannian symmetric spaces of Euclidean type. \\
Consider a connected semisimple Lie group $G$ with finite center and maximal compact subgroup $K$. Let $\mathfrak{g}=\mathfrak{k}\oplus \mathfrak{p}$ be the associated Cartan decomposition, where $\mathfrak{g}$ and $\mathfrak{k}$ are the Lie algebras of $G$ and $K$, respectively. Then, $K$ acts on $\mathfrak{p}$ by the adjoint representation, so that for the Cartan motion group $G_0\coloneqq K\ltimes\mathfrak{p}$ we obtain an associated Riemannian symmetric space of Euclidean type
$$M=\mathfrak{p} \cong G_0/K.$$
Let $\mathfrak{a}\tm \mathfrak{p}$ be a maximal abelian subspace and consider the corresponding set of restricted roots $\Sigma\tm \mathfrak{a}$ with Weyl group $W$. Any $K$-invariant linear partial differential operator $D$ on $\mathfrak{p}$ is of the form $D=p(\partial)$ with $p \in \C[\mathfrak{p}]^K \cong \C[\mathfrak{a}]^W$, uniquely described through its restriction to $\mathfrak{a}$ by Chevalley's restriction theorem. Since similarly any $K$-invariant function $f$ on $\mathfrak{p}$ can be identified with a unique $W$-invariant function on $\mathfrak{a}$ by restriction, there exists a unique $W$-invariant differential operator $\mathrm{rad}(p(\partial))$ on $\mathfrak{a}$, called the radial part, such that for all $f \in C^\infty(\mathfrak{p})^K$
$$(p(\partial)f)|_{\mathfrak{a}}=\mathrm{rad}(p(\partial))f|_{\mathfrak{a}}.$$ Let $(m_\alpha)_{\alpha \in \Sigma}$ be the root multiplicities, i.e. the dimension of the root spaces. Choose a reduced root system $R \tm \Sigma$ and define
$$k_\alpha\coloneqq \frac{1}{4}\sum\limits_{\beta \in \R\alpha \cap \Sigma} m_\beta, \quad \alpha \in R.$$
In \cite{dJ93}, the author observed that
$$\mathrm{rad}(p(\partial)) = \mathrm{res}(p|_{\mathfrak{a}}(T)),$$
where $\mathrm{res}(p|_{\mathfrak{a}}(T))$ is the unique $W$-invariant differential operator on $\mathfrak{a}$, which coincides with the Dunkl operator $p|_{\mathfrak{a}}(T)$ on $W$-invariant functions, defined by plugging into $p|_{\mathfrak{a}}$ the commuting directional Dunkl operators
$$T_\xi f(x) \coloneqq \partial_\xi f(x) + \frac{1}{2}\sum\limits_{\alpha \in R} k_\alpha \braket{\alpha,\xi}\frac{f(x)-f(s_\alpha x)}{\braket{\alpha,x}}, \quad \xi \in \mathfrak{a},$$
where $s_\alpha$ is the reflection in the hyperplane perpendicular to $\alpha$. \\
For instance, the Laplace-Beltrami operator $\Delta_{\mathfrak{p}}$ on $\mathfrak{p}$ is $K$-invariant and the radial part can be written as
$$\mathrm{rad}(\Delta_{\mathfrak{p}})=\Delta_{\mathfrak{a}}+\sum\limits_{\alpha \in R_+} k_\alpha \frac{\partial_\alpha}{\braket{\alpha,\m}},$$
where $\Delta_{\mathfrak{a}}$ is the usual Laplacian on $\mathfrak{a}$. 
From this observation, one might expect that elliptic Dunkl operators $q(T)$, i.e. operators $q(T)$ such that the highest order term of $q$ does not vanish on $\mathfrak{a}\backslash \set{0}$, also satisfy some elliptic regularity theorems. One might expect this at least in the $W$-invariant case, but now for arbitrary parameters $k\ge 0$ and not only for such which are related to a Riemannian symmetric space of Euclidean type. This seems to be plausible, as ellipticity only depends on the highest order term, which is independent of $k$. For instance, hypoellipticity of the Dunkl Laplacian was already proven in \cite{MT04}. Important tools are the Dunkl transform and generalized translations, which define the Dunkl convolution $*_k$. Basic ideas for the results and proofs in this paper are in line with those of classical elliptic regularity such as in \cite{FJ98,H03}. However, there are two problems that need to be circumvented. Firstly, the missing (general) Leibniz rule for Dunkl operators, and second the missing knowledge about the support of generalized translations. The most important property of the Dunkl convolution we are able to prove here states that
$$\mathrm{supp}(u*_k v) \tm B_r(0)+W.\mathrm{supp} \,v,$$
for any distributions $u,v$ on $\mathfrak{a}$ such that $\supp\, u$ is contained in the closed ball $B_r(0)$ of radius $r$. This behavior of the support is based on an important result of \cite{DH19} on the support of generalized translations of $L^2$-functions. \\
The paper is organized as follows. We start in Section 2 with a brief introduction in rational Dunkl theory, in particular Dunkl transform and generalized translations. \\ 
In Section 3, we discuss several properties of the Dunkl convolution, which generalizes convolutions of $K$-invariant functions on $\mathfrak{p}\cong G_0/K$. We study the Dunkl convolution of two distributions and obtain information about the support of a convolution. This extends \cite{OS05}, where one of the distributions was required to have compact support. 
In Section 4, we introduce a generalized singular support $\mathrm{singsupp}_k u$ of a distribution $u$, which is defined as the complement of the largest open subset on which $u$ coincides with a function $f\omega$ with $f \in C^\infty(\mathfrak{a})$ and $\omega(x)=\prod_{\alpha \in R}\abs{\braket{\alpha,x}}^{k_\alpha}$. This singular support is consistent with the Dunkl setting, and we examine how that singular support behaves under convolution. 
In Section 5 we give a proof for hypoellipticity of elliptic Dunkl operators, based on the results of the previous sections. To be more precise, for an elliptic Dunkl operator $p(T)$ we prove that
$$W.\mathrm{singsupp}_k(p(T)u)=W.\mathrm{singsupp}_k\, u,$$ 
for all distributions $u$ defined on an open $W$-invariant subset of $\mathfrak{a}$. 
In Section 6 we prove the following elliptic regularity theorem for an elliptic Dunkl operator $p(T)$ of degree $m$, stating
$$p(T)u \in H_{k,loc}^s(\Omega) \te{ if and only if } u \in H_{k,loc}^{s+m}(\Omega),$$
where $H_{k,loc}^s(\Omega)$ are generalized local Sobolev spaces on some $W$-invariant open $\Omega\tm \mathfrak{a}$, as introduced in \cite{T01,MT04}, cf. Section 6.\\
Finally, as an application we will prove that the Dunkl convolution of Riesz distributions associated to the root system of type $A$, introduced and studied in \cite{R20}, exists and that they form a group under Dunkl convolution. This is in line with classical results on Riesz distributions for symmetric cones as in \cite{FK94}.

\section{The rational Dunkl setting}
For a general background on rational Dunkl theory the reader is referred to \cite{Dun89,dJ93,R03,DX14}. 
Let $(\mathfrak{a},\langle \cdot ,\cdot \rangle)$ be a finite-dimensional Euclidean space with norm $|x|\coloneqq \sqrt{\langle x,x\rangle}$ and complexification $\mathfrak{a}_\C=\C\otimes \mathfrak{a}$. Fix a reduced root system $R \subseteq \mathfrak{a}$ with associated finite reflection group $W$ and let $k:R \to \C, \, \alpha \mapsto k_\alpha$ be a $W$-invariant function, called multiplicity. Throughout the paper, $k$ will be fixed and non-negative. As usual, $W$ acts on functions $f: \mathfrak{a} \to \C$ by the assignment $w.f(x)=f(w^{-1}x)$. The rational Dunkl operator associated to $(R,k)$ into direction $\xi \in \mathfrak{a}$ acting on $f \in C^1(\mathfrak{a})$ is defined by
$$T_\xi f(x)\coloneqq T_\xi(k)f(x)\coloneqq \partial_\xi f(x) + \frac{1}{2}\sum_{\alpha \in R} k_\alpha \braket{\alpha,\xi} \frac{f(x)-f(s_\alpha x)}{\braket{\alpha,x}},$$
where $s_\alpha x= x-2\frac{\braket{x,\alpha}}{\braket{\alpha,\alpha}}\alpha$ is the reflection in the hyperplane perpendicular to $\alpha$. The Dunkl operators commute for fixed $(R,k)$, i.e. $T_\xi T_\eta=T_\eta T_\xi$ on $C^2(\Omega)$ for any $W$-invariant open $\Omega \subseteq \mathfrak{a}$, so that $p(T)$ is well-defined for any polynomial function $p$ on $\mathfrak{a}$. The Dunkl kernel
$$E\coloneqq E_k:\mathfrak{a}_\C\times\mathfrak{a}_\C \to \C$$
is the unique holomorphic function such that for any $\lambda \in \mathfrak{a}$ the function $f(x)=E(\lambda,x)$ is the unique analytic solution of the joint eigenvalue problem
$$\begin{cases}
\;T_\xi f = \braket{\lambda,\xi}f, & \te{for all }\xi \in \mathfrak{a}, \\
f(0)=1.
\end{cases}$$
$E$ is positive on $\mathfrak{a}$ and satisfies
$$E(w\lambda,wz)=E(\lambda,z), \quad E(s\lambda,z)=E(\lambda,sz) \te{ and } E(\lambda,z)=E(z,\lambda)$$
for all $\lambda,z \in \mathfrak{a}_\C$, $s \in \C$ and $w \in W$. Together with the weight function
$$\omega(x)\coloneqq \omega_k(x)\coloneqq \prod\limits_{\alpha \in R} |\langle\alpha,x\rangle|^{k_\alpha},$$
the Dunkl kernel defines the kernel of the Dunkl transform
$$\mathcal{F}_kf(\xi) \coloneqq \widehat{f}^{\,k}(\xi)\coloneqq \frac{1}{c_k}\int_{\mathfrak{a}}E(-i\xi,x)f(x)\omega(x) \; \mathrm{d} x$$
for any function $f \in L^1(\mathfrak{a},\omega)\coloneqq L^1(\mathfrak{a},\omega(x)\mathrm{d}x)$ and with the Macdonald-Metha constant
\begin{equation}\label{MacdonaldMetha}
c_k\coloneqq \int_{\mathfrak{a}}e^{-|x|^2/2}\omega(x)\; \mathrm{d}x.
\end{equation}
The Dunkl transform shares several properties with the Euclidean Fourier transform, which shows up in the case $k=0$. We summarize some important facts in the following Lemma.

\begin{lemma}\label{DunklTrafo}
\
\begin{enumerate}
\item[\rm (i)] Riemann-Lebesgue lemma: the Dunkl transform of $f \in L^1(\mathfrak{a},\omega)$ is continuous and vanishes at infinity.
\item[\rm (ii)] Plancherel theorem: $\mathcal{F}_k$ extends to a unitary map of $L^2(\mathfrak{a},\omega)$.
\item[\rm (iii)] $\mathcal{F}_k$ is injective on $L^1(\mathfrak{a},\omega)$.
\item[\rm (iv)] $\mathcal{F}_k$ is an automorphism of the Schwartz space $\mathscr{S}(\mathfrak{a})$ satisfying
$$T_\xi\mathcal{F}_k=-\mathcal{F}_km_{i\xi} \quad \te{ and } \quad \mathcal{F}_kT_\xi=m_{i\xi}\mathcal{F}_k,$$
where $m_z$ is the multiplication operator $f\mapsto \braket{z,\m}f$ for $z \in \mathfrak{a}_\C$. The inverse is given by
$$\mathcal{F}_k^{-1}f(x)=\mathcal{F}_kf(-x).$$
\end{enumerate}
\end{lemma}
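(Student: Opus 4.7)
The plan is to establish the four assertions in the order (iv), (i), (iii), (ii), since (iv) is foundational, parts (i) and (iii) are inexpensive consequences, and the Plancherel theorem (ii) carries the main analytic weight. Throughout, the strategy parallels the classical proof for the Euclidean Fourier transform, with the plane-wave kernel replaced by $E(-i\xi,x)$ and Lebesgue measure by $\omega(x)\d x$.

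For (iv) I would start from the joint eigenfunction property $T_\xi^x E(\lambda,x) = \braket{\lambda,\xi} E(\lambda,x)$ and the symmetry $E(-i\xi,x) = E(x,-i\xi)$; a short scaling computation gives, with $T_\xi$ now acting in the $\xi$-variable, the identity $T_\xi E(-i\xi,x) = -i\braket{x,\xi} E(-i\xi,x)$. Differentiating under the integral yields $T_\xi \mathcal{F}_k = -\mathcal{F}_k m_{i\xi}$ on $\mathscr{S}(\mathfrak{a})$, and the companion identity $\mathcal{F}_k T_\xi = m_{i\xi} \mathcal{F}_k$ follows by transferring $T_\xi$ through the integral using its skew-symmetry against the $\omega$-weighted pairing (a short product-rule calculation). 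Schwartz-stability then reduces to these two relations: any seminorm $\sup_\xi|p(\xi)q(T_\xi)\mathcal{F}_k f(\xi)|$ is transferred onto the $x$-side, where it becomes an $L^1(\omega)$-estimate on Dunkl derivatives of $f$ controlled by $|E(-i\xi,x)|\le 1$. Granted (iv), the remaining two easy parts are immediate. For (i), the bound $\|\mathcal{F}_kf\|_\infty \le c_k^{-1}\|f\|_{L^1(\omega)}$ together with dominated convergence gives continuity, while vanishing at infinity holds on the dense subspace $\mathscr{S}(\mathfrak{a})\tm L^1(\mathfrak{a},\omega)$ by (iv) and so extends by uniform limits since $C_0(\mathfrak{a})$ is closed. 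For (iii), if $\mathcal{F}_kf=0$ then Fubini gives $\int f\cdot \mathcal{F}_k g\, \omega\, \d x = \int \mathcal{F}_k f\cdot g\,\omega\,\d\xi = 0$ for every $g\in \mathscr{S}(\mathfrak{a})$, and $\mathcal{F}_k(\mathscr{S}(\mathfrak{a}))=\mathscr{S}(\mathfrak{a})$ forces $f=0$ a.e.

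The genuine obstacle is (ii), which I would reduce to the inversion formula $\mathcal{F}_k^{-1}f(x) = \mathcal{F}_kf(-x)$ on $\mathscr{S}(\mathfrak{a})$. The approach is classical: first check by direct computation that $e^{-|x|^2/2}$ is a fixed point of $\mathcal{F}_k$, exploiting the very definition of the Macdonald-Mehta constant \eqref{MacdonaldMetha}; then propagate inversion via the intertwining relations of (iv) to all polynomial multiples of the Gaussian, which form a dense subspace of $\mathscr{S}(\mathfrak{a})$ through a Dunkl analogue of the Hermite decomposition; finally polarize to obtain Parseval on $\mathscr{S}(\mathfrak{a})$ and extend by density to a unitary operator on $L^2(\mathfrak{a},\omega)$. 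The hardest step is the density of the Dunkl-Hermite system in $\mathscr{S}(\mathfrak{a})$, which is where the deeper analytic results of \cite{dJ93}, relying on the entirety of $E(\lambda,\cdot)$ and the positivity of $\omega$, are invoked in full force.
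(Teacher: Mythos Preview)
The paper does not prove this lemma: it is stated as a summary of standard facts from rational Dunkl theory, with the references \cite{Dun89,dJ93,R03,DX14} cited at the beginning of Section~2 serving as the source. There is therefore no proof in the paper to compare against.

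Your sketch is a reasonable outline of how the proofs in the literature proceed, and the logical dependencies you identify---deriving (i) and (iii) from (iv), and deducing (ii) from an inversion formula established first on a Gaussian and propagated by density---are essentially how de Jeu \cite{dJ93} and R\"osler \cite{R03} argue. One small correction: in your sketch of (iv), the claimed identity $T_\xi E(-i\xi,x) = -i\braket{x,\xi} E(-i\xi,x)$ is not quite right as written, since $\braket{x,\xi}$ is linear in $\xi$ whereas you want a scalar eigenvalue; what one actually uses is $T_\eta^\xi E(-i\xi,x) = -i\braket{x,\eta} E(-i\xi,x)$ for a fixed direction $\eta$, which is what the symmetry $E(-i\xi,x)=E(-ix,\xi)$ and the eigenfunction property give. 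With that adjustment your argument goes through.
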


Let $\mathcal{P}$ be the space of polynomial functions on $\mathfrak{a}$ and denote by $\mathcal{P}_n$ the subspace of homogeneous polynomials of degree $n \in \N_0$. There exists a unique automorphism $V=V_k$ of $\mathcal{P}$ such that
$$V1=1, \quad V\partial_\xi=T_\xi V \; \te{ and } \; V(\mathcal{P}_n) \subseteq \mathcal{P}_n,$$
called Dunkl's intertwining operator. Due to Rösler \cite{R99}, $V$ is a positivity preserving operator, i.e. $p\ge 0$ implies $Vp\ge 0$, and for all $x \in \mathfrak{a}$ there exists a unique probability measure $\mu_x^k$ with
$$Vp(x) = \int_{\mathfrak{a}} p\;\mathrm{d}\mu_x^k.$$
Trim\`eche \cite{T01} extended the operator $V$ to a topological automorphism of $C^\infty(\mathfrak{a})$, equipped with the usual locally convex topology. The generalized translation of $\varphi \in C^\infty(\mathfrak{a})$ is then defined by
$$\tau_x\varphi(y)\coloneqq \tau_x^k\varphi(y) \coloneqq V^xV^y(V^{-1}\varphi)(x+y), \quad x,y \in \mathfrak{a},$$
where the superscript $x$ and $y$ denote the relevant variables. For our purpose, the following properties of the generalized translation are of relevance.

\newpage

\begin{lemma}\label{Translation}
\
\begin{enumerate}
\item[\rm (i)] $\tau_x$ acts continuously on the spaces $\mathscr{S}(\mathfrak{a}),C_c^\infty(\mathfrak{a})$ and $C^\infty(\mathfrak{a})$, equipped with the usual locally convex topologies.
\item[\rm (ii)] On $\mathscr{S}(\mathfrak{a})$, $\tau_x$ is given by
$$(\tau_xf)^{\wedge k}(y)=E(ix,y)\widehat{f}^{\,k}(y).$$
\item[\rm (iii)] $\tau_0=\id$ and $\tau_xf(y)=\tau_yf(x)$.
\item[\rm (iv)] If $f \in L^2(\mathfrak{a},\omega)$ has support in $B_r(0)\coloneqq \set{x \in \mathfrak{a}\mid |x|\le r}$, then
$$\mathrm{supp}(\tau_x f) \subseteq W.B_r(-x).$$
\item[\rm (v)] If $R$ is integral, i.e. $2\frac{\braket{\alpha,\beta}}{\braket{\alpha,\alpha}} \in \Z$ for all $\alpha,\beta \in R$, then $f\mapsto \tau_xf(y)$ is a distribution with support contained in $\mathrm{co}(W.x) + \mathrm{co}(W.y)$, where $\mathrm{co}$ denotes the convex hull.
\end{enumerate}
\end{lemma}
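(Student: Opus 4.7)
The plan is to prove the five parts in the order (ii), (iii), (i), and then invoke the cited deeper results for (iv) and (v). The starting point for (ii) is the intertwining identity $V\partial_\xi = T_\xi V$ together with $V1=1$, which forces $V$ to map the classical exponential to the Dunkl kernel: $V^y e^{\braket{\lambda,y}} = E(\lambda,y)$ for $\lambda\in\mathfrak{a}_\C$, equivalently $V^{-1}E(\lambda,\cdot)(y) = e^{\braket{\lambda,y}}$. Applying $V^x V^y$ separately to each factor of $e^{\braket{\lambda,x+y}} = e^{\braket{\lambda,x}}e^{\braket{\lambda,y}}$ yields the product formula $\tau_x E(\lambda,\cdot)(y) = E(\lambda,x)\,E(\lambda,y)$ directly from the definition. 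Substituting $\lambda = i\xi$ and writing an arbitrary $f \in \mathscr{S}(\mathfrak{a})$ as a superposition of Dunkl exponentials via the inversion formula (Lemma \ref{DunklTrafo} (iv)), pushing $\tau_x$ past the integral gives
\[
\tau_x f(y) \;=\; \frac{1}{c_k}\int_{\mathfrak{a}} \widehat{f}^{\,k}(\xi)\,E(ix,\xi)\,E(iy,\xi)\,\omega(\xi)\dif\xi \;=\; \mathcal{F}_k^{-1}\bigl(E(ix,\cdot)\widehat{f}^{\,k}\bigr)(y),
\]
which is (ii). The interchange is justified by the $C^\infty$-continuity of $V$ (Trim\`eche) after noting that the truncated inversion integrals converge in $C^\infty(\mathfrak{a})$.

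With (ii) in hand, (iii) is immediate: $(\tau_0 f)^{\wedge k} = E(0,\cdot)\widehat{f}^{\,k} = \widehat{f}^{\,k}$, so $\tau_0 = \id$ by injectivity of $\mathcal{F}_k$, and the symmetry $\tau_x f(y) = \tau_y f(x)$ is manifest in the $(x,y)$-symmetric integral representation. For (i), continuity on $\mathscr{S}(\mathfrak{a})$ follows from (ii): $\mathcal{F}_k$ is a topological automorphism of $\mathscr{S}$ and multiplication by $E(ix,\cdot)$ is continuous on $\mathscr{S}$, since $|E(ix,\cdot)|\le 1$ on $\mathfrak{a}$ and each $T_\eta$-derivative is a polynomial multiple of $E(ix,\cdot)$. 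Continuity on $C^\infty(\mathfrak{a})$ reduces to Trim\`eche's theorem that $V$ is a topological automorphism of $C^\infty$, composed with the continuous diagonal translation $y \mapsto x+y$. Continuity on $C_c^\infty(\mathfrak{a})$ then combines the $C^\infty$-continuity with the support bound (iv): a convergent sequence in $C_c^\infty$ has a common compact support in some $B_r(0)$, and (iv) confines the supports of the $\tau_x$-images to $W.B_r(-x)$, giving convergence in $C^\infty_{W.B_r(-x)}$.

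For the remaining two parts I would cite the literature rather than reprove them. Part (iv) is the $L^2$-support theorem of \cite{DH19}, proven there by a Paley--Wiener-type analysis of the Dunkl transform. Part (v) rests on R\"osler's product formula in the integral case, which represents $\tau_x f(y)$ as an integral of $f$ against an explicit compactly supported signed measure whose support lies inside $\mathrm{co}(W.x)+\mathrm{co}(W.y)$.

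The main obstacle I anticipate is the apparent circularity in (ii): justifying the interchange of $\tau_x$ with the Fourier inversion integral looks like a continuity statement of exactly the kind that (i) is supposed to supply. The clean way out is to first verify the product formula $\tau_x E(\lambda,\cdot)(y) = E(\lambda,x)E(\lambda,y)$ purely algebraically for polynomial inputs (where $V$ acts coefficient-wise), extend it to entire functions of exponential type via R\"osler's probability representation $Vp(x) = \int p\dif\mu_x^k$, and only then deduce the Fourier identity on $\mathscr{S}(\mathfrak{a})$ by a dominated-convergence argument on Gaussian-times-polynomial approximants, before using (ii) as a tool in the proof of (i).
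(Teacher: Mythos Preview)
Your proposal is essentially correct, but it is worth noting that the paper does not actually give a proof of this lemma: parts (i)--(iii) are treated as standard facts from the Dunkl literature and are not argued at all, part (iv) is simply attributed to \cite{DH19}, and part (v) is attributed to \cite{ASS10}. So your sketches for (i)--(iii) go beyond what the paper provides, and they are sound; in particular, your concern about circularity between (i) and (ii) is harmless, since the $C^\infty$-continuity of $\tau_x$ follows directly from Trim\`eche's theorem on $V$ and can be used to justify the interchange in (ii) before the Schwartz-space and $C_c^\infty$ statements in (i) are addressed.

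There are two small discrepancies with the paper worth flagging. First, your attribution of (v) to ``R\"osler's product formula'' is not quite what the paper says: the paper cites \cite{ASS10} for the support statement $\mathrm{supp}\,\tau_x\delta_y \subseteq \mathrm{co}(W.x)+\mathrm{co}(W.y)$ in the integral case. Second, and more substantively, the paper does contain one genuine argument that you omit: \cite{ASS10} only covers the case where $R$ spans $\mathfrak{a}$, and the paper extends (v) to the general case by observing that Dunkl's intertwining operator factors as $V = V_{\mathfrak{b}}\otimes \id_{\mathfrak{c}}$ with $\mathfrak{b}=\mathrm{span}_{\R}R$ and $\mathfrak{c}=\mathfrak{b}^\perp$, whence $\tau_x = \tau_{x_b}^{\mathfrak{b}}\otimes S_{x_c}$ splits as a Dunkl translation on $\mathfrak{b}$ tensored with an ordinary shift on $\mathfrak{c}$. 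This reduces the general support statement to the spanning case. You should add this step if you want your proof of (v) to cover the same generality as the lemma.
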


In particular, part (iv) will be a key tool in this article. It was proven in \cite{DH19} and is the indispensable part to prove the theorems on elliptic regularity.  Part (v) is proven in \cite{ASS10} for the case when $R$ spans $\mathfrak{a}$, but it is also true in the case where $R$ does not span $\mathfrak{a}$. To see this, put $\mathfrak{b}\coloneqq \mathrm{span}_\R R$ and $\mathfrak{c}\coloneqq \mathfrak{b}^\perp$. Let $V_\mathfrak{b}$ and $\mathfrak{\tau}^\mathfrak{b}_x$ be Dunkl's intertwining operator and generalized translations on $\mathfrak{b}$ associated to $(R,k)$, respectively. From the uniqueness of Dunkl's intertwining operator we obtain that
$$V=V_{\mathfrak{b}} \otimes \id_{\mathfrak{c}}.$$
If $S_y$ denotes the usual translation operator $S_yf(x) =f(x+y)$ for $f:\mathfrak{c} \to \C$, we see from the above factorization of $V$ that
\begin{equation}\label{TranslationFreeDirection}
\tau_{x} = \tau_{x_b}^{\mathfrak{b}}\otimes S_{x_c}, \quad x=x_b+x_c \te{ with } x_b \in \mathfrak{b},\, x_c \in \mathfrak{c}.
\end{equation}
Therefore, the result from \cite{ASS10} are also true if $R$ does not span $\mathfrak{a}$.

\section{Dunkl convolution}\label{Convolution}
The Dunkl convolution is already known and in the literature convolutions of functions and distributions are studied, see for instance \cite{OS05}. Later in this paper, we will need the convolution of two distributions with non-compact support. To our knowledge, this has not been studied so far, only the case where one of the distributions has compact support is known, see for instance \cite{OS05}. For open $\Omega \subseteq \mathfrak{a}$, we denote by $\mathcal{D}'(\Omega)$ and $\mathcal{E}'(\Omega)$ the spaces of distributions on $\mathfrak{a}$ with support contained in $\Omega$ and compact support contained in $\Omega$, respectively. Both spaces are equipped with the topology of pointwise convergence. As usual, the evaluation of a distribution $u \in \mathcal{D}'(\Omega)$ in $\varphi \in C_c^\infty(\Omega)$ will be denoted by the pairing
$$\langle u,\varphi\rangle\coloneqq u(\varphi).$$ 
To any locally integrable function $f:\Omega \to \C$ we associate a distribution $u_f^k \in \mathcal{D}'(\Omega)$ by the assignment
$$\langle u_f^k,\varphi \rangle \coloneqq \langle f,\overline{\varphi}\rangle_{\omega}= \int_{\Omega}f(x)\varphi(x)\omega(x)\dif x.$$
This embedding of locally integrable functions into distributions makes it compatible with the Dunkl setting and differs from the usual embedding, which is the reason to use a superscript $k$ in the notation. In fact, if $\Omega$ is $W$-invariant, the Dunkl operators act continuously on $\mathcal{D}'(\Omega)$ by
$$\langle T_\xi u,\varphi \rangle \coloneqq -\langle u,T_\xi\varphi \rangle,$$
so that the skew-symmetry of Dunkl operators in $L^2(\mathfrak{a},\omega)$, cf. \cite{R03}, leads to
$$T_\xi u_f^k= u_{T_\xi f}^k, \te{ for all } f \in C^\infty(\Omega).$$
Moreover, smooth functions $m \in C^\infty(\Omega)$ act continuously on $\mathcal{D}'(\Omega)$ by multiplication, namely
$$\langle m\cdot u,\varphi\rangle \coloneqq \langle u,m\varphi \rangle, \te{ so that } m\cdot u_f^k=u_{mf}^k.$$
In order to study the Dunkl convolution of distributions, we introduce the following sets in $\mathfrak{a}\times \mathfrak{a}$. \\
\begin{minipage}{0.6\linewidth}
For $r>0$ we define
$$D_r^W\coloneqq \bigcup\limits_{w \in W}\set{(x,y) \in \mathfrak{a}\times \mathfrak{a} \mid |x+wy|\le r}.$$
This set is invariant under the canonical action of $W\times W$ on $\mathfrak{a}\times \mathfrak{a}$. In fact,
it is a $W\times W$-orbit of a diagonal of width $r$ in $\mathfrak{a} \times \mathfrak{a}$. \\
In rank one, with $R=\set{\pm 1} \subseteq \R$, we have $W=\set{\pm \id}$ and $D_r^W$ in $\R^2$ is visualized on the right.
\end{minipage}
\begin{minipage}{0.39\linewidth}
\begin{center}
  \begin{tikzpicture}
 	\fill[color=gray!20!white] 
 	(0,-1) -- (1,-2) -- (2,-2) -- (2,-1) -- (1,0) --
 	(2,1) -- (2,2) -- (1,2) -- (0,1) -- (-1,2) --
 	(-2,2) -- (-2,1) -- (-1,0) -- (-2,-1) --
 	(-2,-2) -- (-1,-2) -- (0,-1);
  	\draw[->,black] (-2,0) -- (2,0);
  	\draw[->,black] (0,-2) -- (0,2);
  	\draw[-, dashed, color=gray!80!black] (-2,1)--(-2,2);
  	\draw[-, dashed, color=gray!80!black] (-2,-1)--(-2,-2);
  	\draw[-, dashed, color=gray!80!black] (2,1)--(2,2);
  	\draw[-, dashed, color=gray!80!black] (2,-1)--(2,-2);
  	\draw[-, dashed, color=gray!80!black] (1,-2)--(2,-2);
  	\draw[-, dashed, color=gray!80!black] (-1,-2)--(-2,-2);
  	\draw[-, dashed, color=gray!80!black] (1,2)--(2,2);
  	\draw[-, dashed, color=gray!80!black] (-1,2)--(-2,2);
    \node[anchor=south east, color=gray!80!black, scale=0.7] at (-1.5,-0.5) {$D_R^W$};
    \node[color=black, scale=0.7] at (0,-2.2) {$R=\set{\pm 1} \subseteq \R$};
  \end{tikzpicture}
\end{center}
\end{minipage}

\begin{definition}
We call two distributions $u,v \in \mathcal{D}'(\mathfrak{a})$ $W$-convolvable if for each $r>0$ the intersection $\mathrm{supp}(u\otimes v)\cap D_r^W$ is bounded, i.e. compact. \\
Here $u\otimes v$ is the usual tensor product of $u$ and $v$. Note that $\mathrm{supp}(u\otimes v)=\mathrm{supp}\, u \times \mathrm{supp} \, v$, so the distributions $u,v$ are $W$-convolvable in the following cases:
\begin{enumerate}
\item $u$ or $v$ has compact support.
\item the supports of $u$ and $v$ are contained in a $W$-invariant closed convex cone $C$ which is proper, i.e. $C$ does not contain one-dimensional subspaces. 
\end{enumerate}
\end{definition}

\begin{remark}
We note the following:
\begin{enumerate}
\item $u,v \in \mathcal{D}'(\mathfrak{a})$ are $W$-convolvable iff the restriction of $+:\mathfrak{a} \times \mathfrak{a} \to \mathfrak{a}$ to $(W.\mathrm{supp}\, u) \times (W.\mathrm{supp}\, v)$ is a proper map.
\item Even in rank one there exist distributions $u$ and $v$ with non-compact support which are $W$-convolvable. As an example, for $R=\set{\pm 1} \tm \R$, consider the distributions
$$u=\sum\limits_{n \in \N} \delta_{2^{2n}} \; \te{ and } \; v=\sum\limits_{n \in \N} \delta_{2^{2n+1}}$$
with supports $2^{2\N}$ and $2^{2\N+1}$, respectively. Then $D_r^W \cap (2^{2\N}\times 2^{2\N+1})$ is always finite, so that $u$ and $v$ are $W$-convolvable.
\item A non-zero $W$-invariant proper closed convex cone $C$, does not have to exist. In fact, such a cone exists if and only if $R$ does not span $\mathfrak{a}$. Later in this paper, the case $\mathfrak{a}=\R^n$, $R=A_{n-1}$ and $C=[0,\infty)^n$ will be of high relevance.
\end{enumerate}
\end{remark}

\begin{lemma}\label{TranslationAsOperatorOfTwoVariables}
The Dunkl translation associated to $(R,k)$ defines a continuous linear operator
$$\tau:C^\infty(\mathfrak{a}) \to C^\infty(\mathfrak{a}\times\mathfrak{a}), \quad \tau \varphi(x,y)=\tau_x\varphi(y)=\tau_y\varphi(x).$$
Moreover, for $\varphi \in C_c^\infty(\mathfrak{a})$ with $\mathrm{supp}\, \varphi \subseteq B_r(0)$ we have
$$\mathrm{supp}(\tau\varphi) \subseteq D_r^W.$$
\end{lemma}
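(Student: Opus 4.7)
The plan is to split the statement into (a) the continuity of $\tau\colon C^\infty(\mathfrak{a})\to C^\infty(\mathfrak{a}\times\mathfrak{a})$, which I would handle via the explicit formula $\tau\varphi(x,y)=V^xV^y(V^{-1}\varphi)(x+y)$, and (b) the support inclusion, which will fall out of Lemma~\ref{Translation}(iv) applied pointwise in $x$. The symmetry $\tau\varphi(x,y)=\tau_y\varphi(x)$ is automatic from Lemma~\ref{Translation}(iii), so both parts really are statements about the same object.

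For (a), the strategy is to realise $\tau$ as a composition of three continuous linear maps between Fr\'echet spaces. First, $V^{-1}$ is a topological automorphism of $C^\infty(\mathfrak{a})$ by Trim\`eche. Second, pullback along the smooth map $\sigma(x,y)=x+y$ is continuous from $C^\infty(\mathfrak{a})$ to $C^\infty(\mathfrak{a}\times\mathfrak{a})$: a routine chain-rule estimate bounds each $C^m$-seminorm of $\psi\circ\sigma$ on a compact $K\tm\mathfrak{a}\times\mathfrak{a}$ by the same-order seminorm of $\psi$ on $\sigma(K)$. Third, I must apply $V$ in each variable. Since $C^\infty(\mathfrak{a})$ is nuclear, $C^\infty(\mathfrak{a}\times\mathfrak{a})\cong C^\infty(\mathfrak{a})\,\hat\otimes\,C^\infty(\mathfrak{a})$ and the operator $V\otimes V$, continuous on the algebraic tensor product, extends by continuity. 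Alternatively one can bypass tensor products by invoking R\"osler's representation $V\psi(x)=\int_{\mathfrak{a}} \psi\,\d\mu_x^k$ together with the weak-$*$ continuity $x\mapsto\mu_x^k$ to differentiate under the integral and directly bound seminorms of $\tau\varphi$ on a compact set by seminorms of $\varphi$ on a (larger) compact set.

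For (b), let $\varphi\in C_c^\infty(\mathfrak{a})$ with $\supp\,\varphi\subseteq B_r(0)$. Then $\varphi\in L^2(\mathfrak{a},\omega)$, so Lemma~\ref{Translation}(iv) gives $\supp(\tau_x\varphi)\tm W.B_r(-x)$ for every $x\in\mathfrak{a}$. Since each $w\in W$ is an isometry,
\[W.B_r(-x)=\bigcup_{w\in W}B_r(-wx)=\set{y\in\mathfrak{a}\mid |x+wy|\le r \te{ for some } w\in W},\]
which is exactly the $x$-slice of $D_r^W$. Hence $\tau\varphi(x,y)=\tau_x\varphi(y)=0$ for every $(x,y)\notin D_r^W$; as $\tau\varphi$ is continuous by (a) and $D_r^W$ is closed, we conclude $\supp(\tau\varphi)\tm D_r^W$.

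The main obstacle I anticipate is the third factor in (a): the componentwise action of $V$ on smooth functions of two variables is transparent on decomposable elements $\psi_1\otimes\psi_2$ but must be upgraded to all of $C^\infty(\mathfrak{a}\times\mathfrak{a})$ with uniform seminorm control, which requires either a nuclearity argument or an honest integral-representation estimate. Everything else in the proof is a direct assembly of facts already recorded in Section~\ref{Convolution} and Lemma~\ref{Translation}.
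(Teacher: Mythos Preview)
Your proof is correct and follows essentially the same route as the paper: the factorization $\tau=(V\otimes V)\circ S\circ V^{-1}$ for continuity, and Lemma~\ref{Translation}(iv) together with the identification of $W.B_r(-x)$ with the $x$-slice of $D_r^W$ for the support claim. If anything you are more careful than the paper about the continuity of the two-variable action $V\otimes V$, which the paper simply asserts.
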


\begin{proof}
Since $\tau$ can be expressed in terms of Dunkl's intertwining operator $V$ and the operator $S$ defined by $Sf(x,y)=f(x+y)$, i.e.
$$\tau= (V\otimes V) \circ S \circ V^{-1},$$
the continuity is a consequence of Lemma \ref{Translation}. For $(x,y) \in \mathfrak{a}\times\mathfrak{a}$ we have $(x,y) \in D_r^W$ iff $y \in W.B_r(-x)$, so that $\mathrm{supp}(\tau\varphi) \subseteq D_r^W$ holds by Lemma \ref{Translation}.
\end{proof}

\begin{definition}
Assume that $u,v \in \mathcal{D}'(\mathfrak{a})$ are $W$-convolvable. Choose a cut-off function $\rho \in C^\infty(\mathfrak{a}\times \mathfrak{a})$ with support in an $\epsilon$-neighborhood of $\mathrm{supp} \, u \times \mathrm{supp} \, v$ and $\rho \equiv 1$ in a smaller neighborhood. Note that in this case $(\mathrm{supp}\, \rho) \cap D_r^W$ is still compact for all $r>0$. Then, we are able to define
\begin{equation}\label{DefConv}
\braket{u*_kv,\varphi}\coloneqq \braket{u\otimes v,\rho \cdot \tau \varphi}, \quad \varphi \in C_c^\infty(\mathfrak{a}),
\end{equation}
which does not depend on the particular choice of $\rho$. It is called the Dunkl convolution of $u$ and $v$. 
\end{definition}

This definition was already given in \cite{OS05} under that assumption that $u$ or $v$ has compact support.

\begin{theorem}\label{PropertiesDunklConvolution}
Consider $u_1,u_2,u,v \in \mathcal{D}'(\mathfrak{a})$, $\lambda \in \C$ and $\xi \in \mathfrak{a}$. Then:
\begin{enumerate}
\item[\rm (i)] If $u,v$ are $W$-convolvable, then $u*_kv \in \mathcal{D}'(\mathfrak{a})$ and
$$u*_kv = v*_k u.$$
Moreover, $T_\xi u$ and $v$ are $W$-convolvable, and so are $u$ and $T_\xi v$, with
$$T_\xi(u*_kv)=(T_\xi u)*_k v = u*_k(T_\xi v).$$
\item[\rm (ii)] If both $u_1$ and $u_2$ are $W$-convolvable with $v$, then $u_1+\lambda u_2$ is $W$-convolvable with $v$ and
$$(u_1+\lambda u_2)*_k v = (u_1*_kv)+\lambda(u_2*_kv).$$
\item[\rm (iii)] $u$ is $W$-convolable with the Dirac distribution $\delta_0=(\varphi\mapsto \varphi(0))$ and 
$$u*_k\delta_0=\delta_0*_k u = u.$$
\end{enumerate}
\end{theorem}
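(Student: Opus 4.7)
My plan is to reduce each assertion to a manipulation of the pairing $\braket{u\otimes v,\rho\cdot\tau\varphi}$. The $W$-convolvability hypothesis together with Lemma \ref{TranslationAsOperatorOfTwoVariables} guarantees that $\rho\cdot\tau\varphi$ lies in $C_c^\infty(\mathfrak{a}\times\mathfrak{a})$, so the pairing is well defined. To establish $u*_kv\in\mathcal{D}'(\mathfrak{a})$, I would verify continuity of $\varphi\mapsto\rho\cdot\tau\varphi$ from $C_c^\infty(\mathfrak{a})$ to $C_c^\infty(\mathfrak{a}\times\mathfrak{a})$ using the continuity of $\tau$ in Lemma \ref{TranslationAsOperatorOfTwoVariables}. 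Independence of the choice of $\rho$ follows because the difference of two admissible cut-offs vanishes on a neighborhood of $\supp u\times\supp v$, so its product with $\tau\varphi$ is supported away from $\supp(u\otimes v)$ and pairs to zero.

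For commutativity in (i), I would exploit the symmetry $\tau\varphi(x,y)=\tau\varphi(y,x)$ from Lemma \ref{Translation}(iii): writing $\sigma(x,y)=(y,x)$, one may take $\rho\circ\sigma$ as the cut-off for $v*_ku$, and the substitution $\braket{v\otimes u,f\circ\sigma}=\braket{u\otimes v,f}$ gives the identity at once. The main obstacle is the claim $T_\xi(u*_kv)=(T_\xi u)*_kv=u*_k(T_\xi v)$, because Dunkl operators do not obey the Leibniz rule and moving $T_\xi$ across a cut-off is therefore non-trivial. My key idea is to choose $\rho$ to be $W$-invariant in the $x$-variable (arranged by $W$-averaging in $x$, whose validity as an admissible cut-off rests on the $W\times W$-invariance of $D_r^W$). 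Then the reflection terms in the expansion of $T_\xi^x(\rho\cdot g)$ cancel, yielding the honest product identity $T_\xi^x(\rho\cdot g)=(\partial_\xi^x\rho)\cdot g+\rho\cdot T_\xi^xg$. Since $\partial_\xi^x\rho\equiv 0$ on a neighborhood of $\supp u\times\supp v$, the error term pairs to zero against $u\otimes v$. Combining this with $T_\xi^x\tau\varphi=\tau(T_\xi\varphi)=T_\xi^y\tau\varphi$ (a consequence of $T_\xi\tau_x=\tau_xT_\xi$ together with the symmetry in Lemma \ref{Translation}(iii)) and with the definition $\braket{T_\xi u,\psi}=-\braket{u,T_\xi\psi}$ yields $T_\xi(u*_kv)=(T_\xi u)*_kv$; an analogous choice of $\rho$ $W$-invariant in $y$ handles the other equality. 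The $W$-convolvability of $T_\xi u$ with $v$ is automatic since $\supp(T_\xi u)\tm W.\supp u$ and $D_r^W$ is $W\times W$-invariant.

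Parts (ii) and (iii) follow by more routine arguments. For (ii), linearity reduces to linearity of the tensor product and of the pairing; $W$-convolvability of $u_1+\lambda u_2$ with $v$ follows from $\supp(u_1+\lambda u_2)\tm\supp u_1\cup\supp u_2$ together with stability of compactness under finite unions. For (iii), $\delta_0\in\mathcal{E}'(\mathfrak{a})$, so $u$ and $\delta_0$ are $W$-convolvable. Applying Fubini for distributions, $\braket{u\otimes\delta_0,F}=\braket{u,x\mapsto F(x,0)}$, together with $\tau_x\varphi(0)=\tau_0\varphi(x)=\varphi(x)$ from Lemma \ref{Translation}(iii) and $\rho(x,0)=1$ on $\supp u$, gives $\braket{u*_k\delta_0,\varphi}=\braket{u,\varphi}$; the reverse equality $\delta_0*_ku=u$ then follows from commutativity.
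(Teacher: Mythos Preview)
Your proposal is correct and follows essentially the same route as the paper: the paper likewise declares everything but the Dunkl-operator identity routine, and for that identity it chooses a $W$-invariant cut-off (concretely $\rho(x,y)=\rho_u(x)\rho_v(y)$ with both factors $W$-invariant), invokes the Leibniz rule $T_\xi(\chi f)=(\partial_\xi\chi)f+\chi\,T_\xi f$ valid for $W$-invariant $\chi$, and discards the $(\partial_\xi^x\rho)\cdot\tau\varphi$ term since $\partial_\xi^x\rho$ vanishes near $\supp u\times\supp v$. The only cosmetic difference is that the paper builds the $W$-invariant cut-off directly as a tensor product rather than by averaging; note that if you do average, you should start from a $\rho$ that is already $\equiv 1$ on a neighborhood of $(W.\supp u)\times\supp v$, otherwise the averaged function need not equal $1$ on $\supp u\times\supp v$.
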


\begin{proof}
Everything is straightforward to verify, we only have to justify the formula for the action of the Dunkl operators on a convolution. For this we choose a more explicit $\rho$ in \eqref{DefConv}, namely
$$\rho(x,y)\coloneqq \rho_u(x)\rho_v(y), \quad x,y \in \mathfrak{a}$$
with $W$-invariant $\rho_u$ and $\rho_v$. Moreover, we choose $\rho_u$ such that it has support in an $\epsilon$-neighborhood of $\mathrm{supp}\, u$ and $\rho_u \equiv 1$ in a smaller neighborhood. We choose $\rho_v$ in a similar fashion. By Lemma \ref{Translation}, the choice of $\rho$ and the Leibniz formula $T_\xi(\chi f)=(\partial_\xi\chi)\cdot f+\chi\cdot (T_\xi f)$ for $W$-invariant $\chi$ we have
\begin{align*}
\braket{T_\xi (u*_k v),\varphi} &= -\braket{u\otimes v, \rho \cdot \tau T_\xi \varphi}= -\braket{u\otimes v, \rho  \cdot T_\xi^x(\tau\varphi)} \\
&=\braket{u\otimes v, (\partial_\xi^x \rho)\cdot (\tau\varphi)}-\braket{u\otimes v, T_\xi^x(\rho \cdot \tau\varphi)}  \\
&= -\braket{u\otimes v, T_\xi^x(\rho \cdot \tau\varphi)} = \braket{(T_\xi u)\otimes v, \rho \cdot \tau\varphi}=\braket{(T_\xi u )*_k v,\varphi}.
\end{align*}
\end{proof}

\begin{definition}
Similar to the Euclidean case ($k=0$), we define
$$(f*_kg)(x)\coloneqq \int_{\mathfrak{a}} (\tau_y f)(-x) g(x)\omega(x)\dif x,$$
for $f,g \in C^\infty(\mathfrak{a})$, one with compact support, or both $f,g \in \mathscr{S}(\mathfrak{a})$. Moreover, we define
$$(f*_k u)(x)\coloneqq \braket{u(y),(\tau_xf)(-y)},$$
for $f \in C^\infty(\mathfrak{a}), u \in \mathcal{D}'(\mathfrak{a})$, one with compact support. Here $u(y)$ means that $u$ acts on functions of the $y$-variable.
\end{definition}

The following properties are straightforward, or can be found in \cite{OS05}.

\begin{lemma}\label{DifferentConvolutions}
The Dunkl convolution satisfies:
\begin{enumerate}
\item[\rm (i)] For $f,g \in \mathscr{S}(\mathfrak{a})$ one has $f*_k g \in \mathscr{S}(\mathfrak{a})$ and $(f*_k g)^{\wedge k}=\widehat{f}^{\,k}\cdot \widehat{g}^{\,k}$.
\item[\rm (ii)] For $f \in C_c^\infty(\mathfrak{a})$, the map $g \mapsto f*_k g$ is continuous on $C^\infty(\mathfrak{a})$ and satisfies
\begin{align*}
f*_kg&=g*_k f, \\
u_f^k *_k u_g^k &= u_{f*_k g}^k.
\end{align*}
\item[\rm (iii)] For $f \in C^\infty(\mathfrak{a})$ and $u \in \mathcal{D}'(\mathfrak{a})$, one with compact support, we have that $f*_k u \in C^\infty(\mathfrak{a})$ and
$$u_{f*_k u}^k = u_f^k *_k u.$$
\end{enumerate}
\end{lemma}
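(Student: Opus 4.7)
The plan is to handle the three parts in order, relying on the intertwining formula from Lemma \ref{Translation}(ii), the symmetry of $\tau$ from Lemma \ref{Translation}(iii), and the support control from Lemma \ref{Translation}(iv).

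For (i), I would compute $(f *_k g)^{\wedge k}$ directly from the definition. Substituting the integral definition of $f *_k g$ into the Dunkl transform and applying Fubini (which is legitimate since $f,g \in \mathscr S(\mathfrak a)$ gives enough decay, and $E(-i\xi,\cdot)$ is bounded on $\mathfrak a$), the problem reduces to evaluating the inner integral $\int E(-i\xi,x)(\tau_y f)(-x)\,\omega(x)\,\mathrm d x$. Using the evenness of $\omega$, the identity $E(-i\xi,-x)=E(i\xi,x)$, and the intertwining relation $(\tau_y f)^{\wedge k}(\eta)=E(iy,\eta)\widehat f^{\,k}(\eta)$ from Lemma \ref{Translation}(ii), this collapses to the product $\widehat f^{\,k}(\xi)\cdot \widehat g^{\,k}(\xi)$ up to the normalization constant $c_k$. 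Since $\widehat f^{\,k},\widehat g^{\,k}\in\mathscr S(\mathfrak a)$ by Lemma \ref{DunklTrafo}(iv), their product is Schwartz, and applying the inverse Dunkl transform (again an automorphism of $\mathscr S$) yields $f*_k g\in\mathscr S(\mathfrak a)$.

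For (ii), commutativity $f *_k g = g *_k f$ follows from $\tau_y f(-x)=\tau_{-x} f(y)$ (Lemma \ref{Translation}(iii)) together with the evenness of $\omega$. The continuity of $g \mapsto f *_k g$ on $C^\infty(\mathfrak a)$ relies on the crucial observation that for $f\in C_c^\infty(\mathfrak a)$ supported in $B_r(0)$, the function $y \mapsto (\tau_y f)(-x)=(\tau_{-x} f)(y)$ is supported in the compact set $W.B_r(x)$ by Lemma \ref{Translation}(iv); hence the defining integral is a continuous pairing with $g$ in the $C^\infty$-topology, and smoothness in $x$ is inherited from joint smoothness via Lemma \ref{TranslationAsOperatorOfTwoVariables}. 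For the identity $u_f^k *_k u_g^k = u_{f*_k g}^k$, I would choose a tensor-product cut-off $\rho(x,y)=\rho_u(x)\rho_v(y)$ in \eqref{DefConv}, expand the pairing $\braket{u_f^k\otimes u_g^k,\rho\cdot\tau\varphi}$ as a double integral, apply Fubini, and recognize the result as $\int(f *_k g)(x)\varphi(x)\omega(x)\,\mathrm d x$.

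For (iii), smoothness of $f *_k u$ follows from Lemma \ref{TranslationAsOperatorOfTwoVariables}: $(x,y)\mapsto (\tau_x f)(-y)$ is jointly smooth, and since $u$ has compact support, differentiation under the distribution pairing $\braket{u(y),\cdot}$ is justified and commutes with $T_\xi$. The identity $u_{f *_k u}^k = u_f^k *_k u$ reduces to checking, for $\varphi\in C_c^\infty(\mathfrak a)$, the equality $\int (f *_k u)(x)\varphi(x)\omega(x)\,\mathrm d x = \braket{u_f^k\otimes u,\rho\cdot\tau\varphi}$, which follows by unpacking the left-hand side as $\braket{u(y),\int \tau_x f(-y)\varphi(x)\omega(x)\,\mathrm d x}$ and using the symmetry $\tau_x f(-y)=\tau_{-y}f(x)$ to exchange the pairing with the integral, legitimate because one factor has compact support. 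The main obstacle is the careful bookkeeping of the symmetry identities for $E$, $\omega$, and $\tau$ in part (i); once the Dunkl convolution theorem is in hand, the remaining identities are routine applications of Fubini under the compact-support hypotheses.
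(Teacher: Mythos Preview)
Your proposal is correct and in fact more detailed than the paper's own treatment: the paper does not prove this lemma at all but simply states that the properties are ``straightforward, or can be found in \cite{OS05}.'' The route you sketch---the convolution theorem (i) via Fubini and Lemma~\ref{Translation}(ii), commutativity and continuity in (ii) via the symmetry and support properties of $\tau$, and smoothness plus compatibility in (iii) via Lemma~\ref{TranslationAsOperatorOfTwoVariables}---is exactly the standard argument one finds in the cited reference. One small remark: for commutativity in (ii), the identity $\tau_y f(-x)=\tau_{-x}f(y)$ alone does not close the loop; you still need the self-adjointness $\int(\tau_{-x}f)g\,\omega=\int f(\tau_{-x}g)\,\omega$, which follows either from the Plancherel formula together with Lemma~\ref{Translation}(ii) or, more directly, from part (i) by taking Dunkl transforms and using injectivity.
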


\begin{proposition}\label{Continuity}
Let $\Omega_1,\Omega_2 \subseteq \mathfrak{a}$ be open $W$-invariant sets such that $\overline{\Omega_1\times \Omega_2} \cap D_r^W$ is compact for all $r >0$. Then the Dunkl convolution defines a sequentially continuous operator
$$*_k:\mathcal{D}'(\Omega_1)\times \mathcal{D}'(\Omega_2) \to \mathcal{D}'(\mathfrak{a}).$$
\end{proposition}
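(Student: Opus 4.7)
The plan is to reduce the sequential continuity of $*_k$ to the standard joint sequential continuity of the tensor product $\otimes\colon\mathcal{D}'(\mathfrak{a})\times\mathcal{D}'(\mathfrak{a})\to\mathcal{D}'(\mathfrak{a}\times\mathfrak{a})$, by showing that, for each fixed $\varphi$, the auxiliary cut-off $\rho$ entering the definition \eqref{DefConv} can be replaced by a single cut-off depending only on $\Omega_1,\Omega_2$ and $\mathrm{supp}\,\varphi$, not on $u$ or $v$.

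First, I would note that under the hypothesis every pair $u\in\mathcal{D}'(\Omega_1)$, $v\in\mathcal{D}'(\Omega_2)$ is automatically $W$-convolvable: indeed, $\mathrm{supp}(u\otimes v)\cap D_r^W\subseteq \overline{\Omega_1\times\Omega_2}\cap D_r^W$ is compact, so $u*_k v$ is defined. Next, for each $r>0$ I would use the compactness of $\overline{\Omega_1\times\Omega_2}\cap D_r^W$ to pick a single $\chi_r\in C_c^\infty(\mathfrak{a}\times\mathfrak{a})$ which is identically $1$ on an open neighborhood of that set. For any $\varphi\in C_c^\infty(\mathfrak{a})$ with $\mathrm{supp}\,\varphi\subseteq B_r(0)$, the function $\chi_r\cdot\tau\varphi$ is then compactly supported in $\mathfrak{a}\times\mathfrak{a}$ and depends only on $\Omega_1,\Omega_2,r,\varphi$.

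The key identity to establish is
$$\braket{u*_k v,\varphi}=\braket{u\otimes v,\chi_r\cdot\tau\varphi}$$
for all $u\in\mathcal{D}'(\Omega_1)$, $v\in\mathcal{D}'(\Omega_2)$ and all such $\varphi$. I would compare $\chi_r$ with an admissible cut-off $\rho$ from the definition: by Lemma \ref{TranslationAsOperatorOfTwoVariables}, $\mathrm{supp}(\tau\varphi)\subseteq D_r^W$, and $D_r^W$ is closed since $W$ is finite. At a point of $\mathrm{supp}(u\otimes v)$ outside $D_r^W$, $\tau\varphi$ already vanishes in a neighborhood, while at a point inside $\mathrm{supp}(u\otimes v)\cap D_r^W\subseteq \overline{\Omega_1\times\Omega_2}\cap D_r^W$, both $\chi_r$ and $\rho$ equal $1$ in a neighborhood. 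Hence $(\chi_r-\rho)\tau\varphi$ vanishes in a neighborhood of $\mathrm{supp}(u\otimes v)$, and the identity follows from the independence of $u*_k v$ from the particular cut-off.

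Once this is in place, sequential continuity is immediate. If $u_n\to u$ in $\mathcal{D}'(\Omega_1)$ and $v_n\to v$ in $\mathcal{D}'(\Omega_2)$, the joint sequential continuity of the tensor product (a consequence of the Banach--Steinhaus theorem, which applies because a weak-$*$ convergent sequence in $\mathcal{D}'$ is equicontinuous on every $C_c^\infty(K)$) yields $u_n\otimes v_n\to u\otimes v$ in $\mathcal{D}'(\mathfrak{a}\times\mathfrak{a})$, and testing against the fixed function $\chi_r\cdot\tau\varphi$ gives $\braket{u_n*_k v_n,\varphi}\to\braket{u*_k v,\varphi}$. The main obstacle is proving the uniform cut-off identity; the rest consists of invoking standard distribution-theoretic facts.
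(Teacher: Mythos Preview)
Your proposal is correct and follows essentially the same approach as the paper: the paper's proof is the single sentence ``$\rho$ can be chosen uniformly for all $(u,v)\in\mathcal{D}'(\Omega_1)\times\mathcal{D}'(\Omega_2)$, so the claim follows from continuity of the tensor product,'' and your argument is a careful fleshing out of exactly this idea, including the verification that the uniform cut-off $\chi_r$ yields the same pairing as an admissible $\rho$ from Definition~\eqref{DefConv}.
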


\begin{proof}
This is an immediate consequence of the continuity of the tensor product and Definition \ref{DefConv}, since $\rho$ can be chosen uniformly for all $(u,v) \in \mathcal{D}'(\Omega_1)\times\mathcal{D}'(\Omega_2)$. 
\end{proof}

\begin{corollary}\label{Density}
The distributions $u_f^k$, $f \in C_c^\infty(\mathfrak{a})$, form a dense subspace of $\mathcal{D}'(\mathfrak{a})$.
\end{corollary}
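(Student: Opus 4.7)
The plan is to prove density via Hahn-Banach. The space $\mathcal{D}'(\mathfrak{a})$ equipped with the topology of pointwise convergence is a locally convex Hausdorff topological vector space, and by the standard weak-$*$ duality its topological dual is canonically identified with $C_c^\infty(\mathfrak{a})$ through $\varphi \mapsto (u \mapsto \braket{u,\varphi})$. Consequently, a linear subspace $V \subseteq \mathcal{D}'(\mathfrak{a})$ is dense if and only if every $\varphi \in C_c^\infty(\mathfrak{a})$ that annihilates $V$ vanishes identically. I would apply this criterion to $V = \set{u_f^k : f \in C_c^\infty(\mathfrak{a})}$.

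Concretely, I would fix $\varphi \in C_c^\infty(\mathfrak{a})$ with $\braket{u_f^k,\varphi} = 0$ for every $f \in C_c^\infty(\mathfrak{a})$. By the definition of the embedding $f \mapsto u_f^k$ this reads
$$\int_{\mathfrak{a}} f(x)\varphi(x)\omega(x)\dif x = 0 \quad \text{for all } f \in C_c^\infty(\mathfrak{a}).$$
The product $\varphi\omega$ is continuous on $\mathfrak{a}$ and has compact support, so the fundamental lemma of the calculus of variations forces $\varphi\omega \equiv 0$ on $\mathfrak{a}$. Since $\omega(x) > 0$ on the complement of the union of reflection hyperplanes $\bigcup_{\alpha \in R}\alpha^\perp$, an open dense subset of $\mathfrak{a}$, it follows that $\varphi \equiv 0$ on that dense subset, and hence $\varphi \equiv 0$ everywhere by continuity. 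Hahn-Banach then finishes the proof.

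The main obstacle is only conceptual: one must recognize that the pointwise convergence topology on $\mathcal{D}'(\mathfrak{a})$ is the weak-$*$ topology, whose continuous dual is $C_c^\infty(\mathfrak{a})$, so Hahn-Banach applies without friction. A more constructive alternative would be a mollification argument: truncate $u$ by a cut-off $\chi_n \in C_c^\infty(\mathfrak{a})$ so that $\chi_n \cdot u \to u$ in $\mathcal{D}'(\mathfrak{a})$, pick an approximate identity $\rho_n \in C_c^\infty(\mathfrak{a})$ normalized with respect to the weighted measure so that $u_{\rho_n}^k \to \delta_0$ (which requires rescaling by $n^{d+\gamma}$ where $\gamma=\sum_{\alpha \in R}k_\alpha$ is the homogeneity degree of $\omega$), and form $u_{\rho_n}^k *_k (\chi_n \cdot u) = u^k_{\rho_n *_k (\chi_n u)}$ via Lemma \ref{DifferentConvolutions}. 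Lemma \ref{Translation}(iv) then ensures that $\rho_n *_k (\chi_n u)$ is smooth and compactly supported, and Proposition \ref{Continuity} together with Theorem \ref{PropertiesDunklConvolution}(iii) yields convergence to $u$. This second route is heavier because of the support bookkeeping, so I would prefer the Hahn-Banach proof.
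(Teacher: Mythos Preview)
Your Hahn--Banach argument is correct and complete: the weak-$*$ dual of $\mathcal{D}'(\mathfrak{a})$ is indeed $C_c^\infty(\mathfrak{a})$, and the vanishing of $\varphi\omega$ forces $\varphi=0$ exactly as you argue. This is a genuinely different route from the paper's proof, which takes precisely the constructive mollification path you outline as your ``alternative'': the paper first reduces to $u \in \mathcal{E}'(\mathfrak{a})$, builds an approximate identity $\psi_\epsilon$ normalized in $L^1(\mathfrak{a},\omega)$, shows $u^k_{\psi_\epsilon} \to \delta_0$, and then uses Lemma~\ref{DifferentConvolutions}(iii) and Proposition~\ref{Continuity} to conclude that $u^k_{u*_k\psi_\epsilon} \to u$. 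Your Hahn--Banach proof is shorter and avoids any support bookkeeping or appeal to the convolution machinery; the paper's approach, on the other hand, yields an explicit approximating sequence, which is sometimes useful downstream (and also serves as a sanity check that the convolution framework just developed actually reproduces the expected approximate-identity behaviour). Either proof is fine for the corollary as stated.
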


\begin{proof}
Since $\mathcal{E}'(\mathfrak{a})$ is dense in $\mathcal{D}'(\mathfrak{a})$, it suffices to verify density in $\mathcal{E}'(\mathfrak{a})$. 
To do so, choose a non-negative $\psi \in C_c^\infty(\mathfrak{a})$ with $\mathrm{supp}\, \psi \subseteq B_1(0)$ and $\nrm{\psi}_{L^1(\mathfrak{a},\omega)}=1$. For $\epsilon>0$ we put
$$\psi_\epsilon(x)\coloneqq \frac{1}{\epsilon^\gamma}\psi\left(\frac{x}{\epsilon}\right), \quad \te{ with } \gamma\coloneqq \dim \, \mathfrak{a}+\frac{1}{2}\sum\limits_{\alpha \in R} k_\alpha.$$
Then, $\mathrm{supp}\, \psi_\epsilon \subseteq B_\epsilon(0)$ and $\nrm{\psi_\epsilon}_{L^1(\mathfrak{a},\omega)}=1$. Moreover, $u_{\psi_\epsilon}^k$ tends to $\delta_0$ pointwise since
$$|\braket{\delta_0-u_{\psi_\epsilon}^k,\varphi}| \le \int_{\mathfrak{a}}\psi_\epsilon(x)|\varphi(0)-\varphi(x)|\omega(x)\; \mathrm{d}x \le \nrm{\psi-\psi(0)}_{\infty,B_\epsilon(0)}.$$
For $u \in \mathcal{E}'(\mathfrak{a})$ we know from Lemma \ref{DifferentConvolutions} that 
$$u*_k \psi_\epsilon \in C^\infty(\mathfrak{a}).$$ 
Moreover, $\supp (\tau_x\psi_\epsilon) \tm W.B_\epsilon(-x)$ and $u \in \mathcal{E}'(\mathfrak{a})$ lead to $u*_k\psi_\epsilon \in C_c^\infty(\mathfrak{a})$ by definition of the convolution.
Finally, Proposition \ref{Continuity} leads to
$$u_{u*_k\psi_\epsilon}^k = u *_k u_{\psi_\epsilon}^k \underset{\epsilon \to 0}{\longrightarrow} u *_k \delta_0=u$$
\end{proof}

\section{(Singular-)support of Dunkl convolutions}\label{Supports}
As already mentioned, our embedding $f\mapsto u_f^k$ of locally integrable functions into $\mathcal{D}'(\mathfrak{a})$ differs from the usual embedding. Thus, we shall define a specific notion of singular support, adapted to this embedding, and hence adapted to the Dunkl setting.

\begin{definition}
For $u \in \mathcal{D}'(\mathfrak{a})$ we define the $k$-singular support of $u$ as the complement of the largest open subset of $\mathfrak{a}$ on which $u$ is of the form $u_f^k$ for some smooth $f$. To be more precise,
$$\mathrm{singsupp}_k u \coloneqq \bigcap\limits_{\substack{\Omega \subseteq \mathfrak{a} \\ \te{open}}} \set{\mathfrak{a}\backslash \Omega \mid u|_\Omega=u_f^k \te{ for some } f \in C^\infty(\Omega)}.$$
\end{definition}
It is obvious that
$$\mathrm{singsupp}_k u \subseteq \mathrm{supp}\, u.$$
Moreover, the usual singular support $\mathrm{singsupp}\, u=\mathrm{singsupp}_0u$ differes from the $k$-singular support only by singular elements, namely
$$\mathrm{singsupp}_k u \cup \mathfrak{a}_{\mathrm{sing}} = \mathrm{singsupp}\; u \cup \mathfrak{a}_{\mathrm{sing}},$$
where $\mathfrak{a}_{\mathrm{sing}}\coloneqq \bigcup_{\alpha \in R} \alpha^\perp$ is the singular set in $\mathfrak{a}$.

\begin{lemma}\label{Support}
Let $u,v \in \mathcal{D}'(\mathfrak{a})$ be $W$-convolvable distributions. Then:
\begin{enumerate}
\item[\rm (i)] $\mathrm{supp}\, u_f^k = \mathrm{supp}\, f$ for all locally integrable $f:\mathfrak{a}\to \C$.
\item[\rm (ii)] For distributions $u,v \in \mathcal{D}'(\mathfrak{a})$ with $\mathrm{supp} \; u \subseteq B_r(0)$, $r>0$, we have
$$\mathrm{supp}(u*_kv) \subseteq B_r(0) + W.\mathrm{supp} \; v.$$
\item[\rm (iii)] If $R$ is an integral root system, then
$$\mathrm{supp}(u*_k v) \subseteq \overline{\mathrm{co}(W.\mathrm{supp}\, u)+\mathrm{co}(W.\mathrm{supp}\, v)}=\overline{\mathrm{co}(W.\mathrm{supp}\, u+W.\mathrm{supp}\,v)}.$$
\end{enumerate}
\end{lemma}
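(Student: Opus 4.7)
For part (i) the plan is just to unpack the definitions: $u_f^k$ vanishes on an open $\Omega$ iff $\int_\Omega f\varphi\,\omega\,dx = 0$ for all $\varphi \in C_c^\infty(\Omega)$, and since the weight $\omega$ is strictly positive off the nullset $\mathfrak{a}_{\mathrm{sing}}$, this is equivalent to $f = 0$ a.e.\ on $\Omega$. So the essential support of $f$ and the support of $u_f^k$ coincide.

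The heart of the lemma is (ii), and I would prove it in two stages. First, for the special case $u = u_f^k$ with $f \in C_c^\infty(\mathfrak{a})$ and $\mathrm{supp}\,f \subseteq B_r(0)$, Lemma \ref{DifferentConvolutions}(iii) gives $u_f^k *_k v = u_{f *_k v}^k$, so by (i) it suffices to bound $\mathrm{supp}(f *_k v)$. Using the formula $(f *_k v)(x) = \braket{v(y), (\tau_x f)(-y)}$ from the definition, and applying Lemma \ref{Translation}(iv) to $f \in L^2(\mathfrak{a},\omega)$, the function $y \mapsto (\tau_x f)(-y)$ is supported where $-y \in W.B_r(-x)$, that is, where $x \in w^{-1}y + B_r(0)$ for some $w \in W$. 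Intersecting with $\mathrm{supp}\, v$ shows $(f *_k v)(x) = 0$ off $B_r(0) + W.\mathrm{supp}\,v$.

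For the general $u \in \mathcal{D}'(\mathfrak{a})$ with $\mathrm{supp}\,u \subseteq B_r(0)$, I would approximate $u$ by the mollifications $u_n := u *_k \psi_{1/n}$ from the proof of Corollary \ref{Density}. Running the same direct computation for $\psi_{1/n} *_k u$ (again via Lemma \ref{Translation}(iv)) yields $u_n = u_{f_n}^k$ with $f_n \in C_c^\infty(\mathfrak{a})$ and $\mathrm{supp}\,f_n \subseteq B_{r+1/n}(0)$, so the smooth case gives $\mathrm{supp}(u_n *_k v) \subseteq B_{r+1/n}(0) + W.\mathrm{supp}\,v$. Choosing neighborhoods $\Omega_1 \supseteq B_{r+1}(0)$ and $\Omega_2$ a fattening of $W.\mathrm{supp}\,v$, Proposition \ref{Continuity} applies to $(u_n, v) \to (u, v)$ and gives $u_n *_k v \to u *_k v$ in $\mathcal{D}'(\mathfrak{a})$. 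Any test function $\varphi$ at positive distance from $B_r(0) + W.\mathrm{supp}\,v$ is then at positive distance from $B_{r+1/n}(0) + W.\mathrm{supp}\,v$ for large $n$, so $\braket{u_n *_k v, \varphi} = 0$ eventually, yielding $\braket{u *_k v, \varphi} = 0$.

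Part (iii) is proved directly from the definition $\braket{u *_k v, \varphi} = \braket{u \otimes v, \rho \cdot \tau\varphi}$. Take $\varphi \in C_c^\infty(\mathfrak{a})$ at positive distance from $K := \overline{\mathrm{co}(W.\mathrm{supp}\,u) + \mathrm{co}(W.\mathrm{supp}\,v)}$. For each $(x_0,y_0) \in \mathrm{supp}\,u \times \mathrm{supp}\,v$, the compact set $\mathrm{co}(W.x_0) + \mathrm{co}(W.y_0)$ lies in $K$ and is therefore disjoint from $\mathrm{supp}\,\varphi$, so Lemma \ref{Translation}(v) gives $\tau_{x_0}\varphi(y_0) = 0$. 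Hausdorff-continuity of the set-valued map $(x,y) \mapsto \mathrm{co}(W.x) + \mathrm{co}(W.y)$ promotes this to vanishing of $\tau\varphi$ on a neighborhood of $(x_0,y_0)$, and hence on a neighborhood of $\mathrm{supp}\,u \times \mathrm{supp}\,v = \mathrm{supp}(u \otimes v)$; the pairing vanishes. The displayed set equality is just $\mathrm{co}(A) + \mathrm{co}(B) = \mathrm{co}(A+B)$, a standard convexity fact. The main obstacle is in (ii): Lemma \ref{Translation}(iv) is only available for $L^2$-functions supported in \emph{origin-centered} balls, so one cannot apply it to $\varphi$ directly; the mollification $u *_k \psi_{1/n}$, whose support is controlled directly by the lemma, is what bridges the gap and permits a limiting argument based on Proposition \ref{Continuity}.
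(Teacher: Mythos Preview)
Your proof is correct and follows essentially the same strategy as the paper's: for (ii), reduce to the smooth case via density (Corollary~\ref{Density}) and continuity (Proposition~\ref{Continuity}), then invoke the translation support estimate of Lemma~\ref{Translation}(iv); for (iii), apply Lemma~\ref{Translation}(v) directly to see that $\tau\varphi$ vanishes on $\mathrm{supp}\,u\times\mathrm{supp}\,v$. The only minor variation is that the paper approximates \emph{both} $u$ and $v$ by $C_c^\infty$-functions, whereas you approximate only $u$ and handle general $v$ through Lemma~\ref{DifferentConvolutions}(iii) --- a slight economy, and your Hausdorff-continuity remark in (iii) is a bit more careful than the paper's terse version, but the essential route is the same.
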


\begin{proof}
Part (i) is immediate by definition and the fact that the zero set of $\omega$ is a finite union of the hyperplanes. 
\begin{enumerate}
\item[(ii)] By part (i), Lemma \ref{DifferentConvolutions}, Proposition \ref{Continuity} and Corollary \ref{Density}, it suffices to prove
\begin{equation}\label{ConvFuncSupp}
\mathrm{supp}(f*_kg) \subseteq B_r(0) + W.\mathrm{supp}\, g
\end{equation}
for $f,g \in C_c^\infty(\mathfrak{a})$ with $\mathrm{supp}\, f \subseteq B_r(0)$. Since $\mathrm{supp} \, f \subseteq B_r(0)$, we have by Lemma \ref{Translation} (iv) that $\mathrm{supp} \; \tau_xf \subseteq W.B_r(-x)$. So that \eqref{ConvFuncSupp} is a consequence of the explicit formula
$$(f*_k g)(y)= \int_{\mathfrak{a}} (\tau_yf)(-x) g(x) \omega(x) \dif x.$$
\item[(iii)] For abbreviation, we write $A\coloneqq\mathrm{supp}\, u$ and $B\coloneqq \mathrm{supp}\, v$. Consider $\varphi \in C_c^\infty(\mathfrak{a})$ with $\mathrm{supp}\, \varphi \cap \overline{(\mathrm{co}(W.A)+\mathrm{co}(W.B))}=\emptyset$. By Lemma \ref{Translation} (v) we have $\tau\varphi(x,y)=0$ for all $(x,y) \in A\times B$ and therefore $\braket{u*_kv,\varphi}=0$.
\end{enumerate}
\end{proof}

\begin{corollary}\label{DistConeAlg}
Assume that $R$ is integral and $C\subseteq \mathfrak{a}$ is a proper $W$-invariant closed convex cone. Then the space of distributions with support contained in $C$ is a unital , associative and commutative algebra over $\C$ with the Dunkl convolution as multiplication.
\end{corollary}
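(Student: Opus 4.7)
The plan is to verify, for the space of distributions on $\mathfrak a$ supported in $C$, that Dunkl convolution is well-defined, commutative, has $\delta_0$ as unit, and is associative; the first three properties are immediate consequences of earlier results, and associativity is the main work.

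For closure, two distributions supported in $C$ are $W$-convolvable by case (2) in the discussion following the definition of $W$-convolvability (as $C$ is a proper $W$-invariant closed convex cone). Since $C$ is $W$-invariant and a closed convex cone, $W.\supp u + W.\supp v \subseteq C + C \subseteq C$ and $\mathrm{co}(C) = C$. The assumption that $R$ is integral allows applying Lemma \ref{Support}(iii) to conclude $\supp(u *_k v) \subseteq C$. Commutativity is Theorem \ref{PropertiesDunklConvolution}(i), and since $0 \in C$ (every cone contains the origin), $\delta_0$ is a unit by Theorem \ref{PropertiesDunklConvolution}(iii).

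For associativity I would proceed by a three-stage approximation. Stage 1: for $f, g, h \in \mathscr S(\mathfrak a)$, both iterated convolutions have Dunkl transform $\widehat f^{\,k}\widehat g^{\,k}\widehat h^{\,k}$ by Lemma \ref{DifferentConvolutions}(i), so they agree by injectivity of $\mathcal F_k$ on $\mathscr S(\mathfrak a)$. Stage 2: for $u, v, w \in \mathcal E'(\mathfrak a)$, mollify by the $\psi_\epsilon$ from the proof of Corollary \ref{Density}, setting $u_\epsilon := u *_k \psi_\epsilon \in C_c^\infty(\mathfrak a)$, similarly for $v_\epsilon, w_\epsilon$; Lemma \ref{DifferentConvolutions}(ii) lifts Stage 1 to an identity between the associated distributions $u_{u_\epsilon}^k, u_{v_\epsilon}^k, u_{w_\epsilon}^k$. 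Since all supports lie in a common bounded $W$-invariant open set for $\epsilon$ small, Proposition \ref{Continuity} and the convergence $u_{u_\epsilon}^k \to u$ (from the proof of Corollary \ref{Density}) pass the identity to $u, v, w$. Stage 3: for $u, v, w$ supported in $C$, truncate by $W$-invariant cutoffs $\chi_n \in C_c^\infty(\mathfrak a)$ with $\chi_n \equiv 1$ on $B_n(0)$; then $\chi_n u \in \mathcal E'(\mathfrak a)$ is still supported in $C$, $\chi_n u \to u$, and Stage 2 applied to the truncations followed by $n \to \infty$ yields associativity in general.

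The delicate point is Stage 3: Proposition \ref{Continuity} needs a $W$-invariant open $\Omega \supseteq C$ with $\overline{\Omega \times \Omega} \cap D_r^W$ compact for every $r > 0$. I would take $\Omega := C + B_1(0)$. Properness of $C$ (it contains no lines) yields, via Hahn--Banach applied to the interior of the dual cone, $\ell \in \mathfrak a$ and $\delta > 0$ with $\braket{\ell, c} \geq \delta |c|$ for all $c \in C$. Given $(x, y) \in \overline{\Omega}\times\overline{\Omega} \cap D_r^W$, write $x = c_1 + b_1$, $y = c_2 + b_2$ with $c_i \in C$, $|b_i| \leq 1$, and choose $w \in W$ with $|x + wy| \leq r$; then $c_1 + wc_2 \in C$ (by $W$-invariance of $C$) has norm $\leq r + 2$, so $\delta(|c_1| + |c_2|) \leq \braket{\ell, c_1 + wc_2} \leq |\ell|(r+2)$, bounding $(x, y)$. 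This boundedness, hence compactness, is precisely what enables the continuity argument on which Stage 3 depends.
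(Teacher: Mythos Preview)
Your proof is correct and follows the same route as the paper: closure via Lemma~\ref{Support}(iii), commutativity and unit via Theorem~\ref{PropertiesDunklConvolution}, and associativity by first establishing it on $\mathscr S(\mathfrak a)$ via the Dunkl transform and then invoking density (Corollary~\ref{Density}) together with the continuity in Proposition~\ref{Continuity}. The paper compresses the associativity argument into a single sentence, whereas you spell out the two approximation layers and, in particular, supply the verification that Proposition~\ref{Continuity} is actually applicable by exhibiting the $W$-invariant neighborhood $\Omega=C+B_1(0)$ of $C$ and checking $\overline{\Omega\times\Omega}\cap D_r^W$ is compact; this point is tacit in the paper but genuinely needed, and your linear-functional argument using properness of $C$ is the natural way to see it.
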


\begin{proof}
By Lemma \ref{Support} (iii) and the conditions on $C$, we see that $\mathrm{supp}(u*_kv) \subseteq C$ for all $u,v \in \mathcal{D}'(\mathfrak{a})$ with support contained in $C$. Theorem \ref{PropertiesDunklConvolution} shows that we have a commutative algebra over $\C$. Moreover, on the Schwartz space $\mathscr{S}(\mathfrak{a})$, the Dunkl convolution is associative. Hence, Lemma \ref{DifferentConvolutions} and Corollary \ref{Density} show that $*_k$ is associative in general.
\end{proof}

\begin{theorem}\label{Singularsupport}
Let $u,v \in \mathcal{D}'(\mathfrak{a})$ be $W$-convolvable distributions. Then:
\begin{enumerate}
\item[\rm (i)] If $\mathrm{singsupp}_k u \subseteq B_r(0)$, then
$$\mathrm{singsupp}_k(u*_kv) \subseteq B_r(0)+W.\mathrm{singsupp}_k v.$$
\item[\rm (ii)] If $R$ is integral, then
$$\mathrm{singsupp}_k(u*_kv) \subseteq \overline{\mathrm{co}(W.\mathrm{singsupp}_k\, u)+\mathrm{co}(W.\mathrm{singsupp}_k v)}.$$
\end{enumerate}
\end{theorem}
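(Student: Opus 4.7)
\emph{Strategy.} The plan is to mimic the classical H\"ormander argument for $\mathrm{singsupp}(u*v)\tm \mathrm{singsupp}\,u+\mathrm{singsupp}\,v$: decompose $u$ and $v$ into a singular and a smooth piece, expand the resulting bilinear sum for $u*_kv$, and check that only the singular-singular summand can contribute to $\mathrm{singsupp}_k(u*_kv)$. Both parts follow the same template and differ only in whether the singular-singular summand is controlled via Lemma \ref{Support} (ii) (part (i)) or Lemma \ref{Support} (iii) (part (ii), which is where the integrality of $R$ enters).

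\emph{Main steps.} Fix a point $x_0$ outside the alleged inclusion set and a bounded open neighborhood $U\tm B_M(0)$ of $x_0$. The first step is a reduction to compactly supported factors: for every $\varphi\in C_c^\infty(U)$, Lemma \ref{TranslationAsOperatorOfTwoVariables} gives $\supp(\tau\varphi)\tm D_M^W$, and the cutoff $\rho$ in \eqref{DefConv}, combined with $W$-convolvability, makes $\rho\cdot\tau\varphi$ compactly supported on $\mathfrak{a}\times\mathfrak{a}$. Multiplying $u,v$ by $W$-invariant smooth cutoffs $\psi_1,\psi_2\in C_c^\infty(\mathfrak{a})$ that are identically $1$ on the two projections of $\supp(\rho\cdot\tau\varphi)$ does not change the pairing $\braket{u*_kv,\varphi}$, so we may replace $u,v$ by the compactly supported distributions $u':=\psi_1u$, $v':=\psi_2v$; note that $\mathrm{singsupp}_k(\psi_iw)\tm\mathrm{singsupp}_kw$. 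Next, pick $W$-invariant cutoffs $\chi,\eta$ with $\chi\equiv 1$ on a neighborhood of $\mathrm{singsupp}_ku$ and $\supp\chi$ in an $\epsilon$-neighborhood of $\mathrm{singsupp}_ku$ (contained in $B_{r+\epsilon}(0)$ for part (i)), and analogously for $\eta$ near $\mathrm{singsupp}_kv$. Writing $u'=\chi u'+(1-\chi)u'=:u_1'+u_2'$ and $v'=v_1'+v_2'$, all four pieces have compact support, and the smooth pieces satisfy $u_2'=u_h^k$, $v_2'=u_g^k$ with $h,g\in C_c^\infty(\mathfrak{a})$ since $(1-\chi),(1-\eta)$ vanish near the respective $k$-singular supports. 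Expand $u'*_kv'$ into the four bilinear summands. The singular-singular term $u_1'*_kv_1'$ has support contained in $B_{r+\epsilon}(0)+W.\supp v_1'$ (by Lemma \ref{Support} (ii) for part (i)) resp.\ in $\overline{\mathrm{co}(W.\supp u_1')+\mathrm{co}(W.\supp v_1')}$ (by Lemma \ref{Support} (iii) for part (ii)); shrinking $\epsilon$ makes this set disjoint from $U$. Each of the three remaining summands contains a factor of the form $u_f^k$ with $f\in C_c^\infty(\mathfrak{a})$, so by Theorem \ref{PropertiesDunklConvolution} (i) and Lemma \ref{DifferentConvolutions} (ii)--(iii) is of the form $u_F^k$ with $F\in C^\infty(\mathfrak{a})$. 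Summing, $u*_kv$ agrees on $U$ with a smooth function, hence $x_0\notin\mathrm{singsupp}_k(u*_kv)$.

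\emph{Main obstacle.} The step deserving most care is the initial reduction to compactly supported factors: one has to justify both that $\rho\cdot\tau\varphi$ is genuinely compactly supported (using the choice of $\rho$ in \eqref{DefConv} together with Lemma \ref{TranslationAsOperatorOfTwoVariables}) and that the replacement $u\mapsto\psi_1u$, $v\mapsto\psi_2v$ leaves $\braket{u*_kv,\varphi}$ unchanged for every $\varphi\in C_c^\infty(U)$. Once this localization is in place, the remainder is a bookkeeping chain of the previously established support and smoothness results for $*_k$, and part (ii) differs from part (i) solely by replacing Lemma \ref{Support} (ii) with Lemma \ref{Support} (iii).
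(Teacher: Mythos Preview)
Your proposal is correct and follows essentially the same approach as the paper: reduce to compactly supported factors, split each factor into a singular piece (via cutoffs near the $k$-singular supports) and a smooth piece, observe via Lemma \ref{DifferentConvolutions} that the three mixed terms are $u_F^k$ with $F\in C^\infty$, and bound the singular--singular term using Lemma \ref{Support} (ii) resp.\ (iii). The only cosmetic difference is in the reduction step: the paper simply picks $R>0$, a single cutoff $\chi\equiv 1$ on $B_{2R}(0)$, and checks directly (again from Lemma \ref{Support} (ii)) that $u*_kv|_{B_R(0)}=(\chi u)*_k(\chi v)|_{B_R(0)}$, which is a bit cleaner than your route through the projections of $\supp(\rho\cdot\tau\varphi)$; also, the paper does not insist that the cutoffs $\chi_u,\chi_v$ be $W$-invariant (and indeed, if you make your $\chi$ $W$-invariant you should really say $\supp\chi$ lies in an $\epsilon$-neighborhood of $W.\mathrm{singsupp}_k u$, which is harmless for both conclusions).
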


\begin{proof}
We consider two cases.
\begin{enumerate}
\item[(a)] First, we assume that both $u$ and $v$ have compact support. Choose an arbitrary $\epsilon>0$ and cutoff functions $\chi_u,\chi_v \in C_c^\infty(\mathfrak{a})$ with
\begin{align*}
\mathrm{supp}\, \chi_u &\subseteq \mathrm{singsupp}_ku + B_\epsilon(0), \quad \chi_u \equiv 1 \te{ on } \mathrm{singsupp}_k u, \\
\mathrm{supp}\, \chi_v &\subseteq \mathrm{singsupp}_kv + B_\epsilon(0), \quad \chi_v \equiv 1 \te{ on } \mathrm{singsupp}_k v.
\end{align*}
Hence, we conclude that
\begin{align*}
\mathrm{supp}(\chi_uu) \subseteq \mathrm{singsupp}_ku + B_\epsilon(0), \quad (1-\chi_u)u=u_{f}^k,\; f \in C_c^\infty(\mathfrak{a}), \\
\mathrm{supp}(\chi_vv) \subseteq \mathrm{singsupp}_kv + B_\epsilon(0), \quad (1-\chi_v)v=u_{g}^k, \; g \in C_c^\infty(\mathfrak{a}). 
\end{align*}
Due to Lemma \ref{DifferentConvolutions} we have
\begin{align*}
((1-\chi_v)v)*_k(\chi_uu) &= u_{g}^k *_k (\chi_u u) = u_{g*_k (\chi_uu)}^k, \\
(\chi_vv)*_k((1-\chi_u)u) &= (\chi_v v)*_k u_{f}^k = u_{f*_k (\chi_vv)}^k, \\
((1-\chi_v)v)*_k((1-\chi_u)u) &= u_{f}^k * u_{g}^k = u_{f*_k g}^k,
\end{align*}
and therefore we see that
\begin{align*}
\mathrm{singsupp}_k (u*_kv) &= \mathrm{singsupp}_k((\chi_uu)*_k(\chi_vv)) \\
&\subseteq \mathrm{supp} ((\chi_u u)*_k (\chi_vv)).
\end{align*}
Finally, we distinguish between the two situations in the theorem:
\begin{enumerate}
\item[(i)] In this case, we can conclude that
$$\mathrm{supp}(\chi_uu) \subseteq \mathrm{singsupp}_ku + B_\epsilon(0) \subseteq B_{r+\epsilon}(0)$$
and therefore 
$$\mathrm{singsupp}_k(u*_kv) \subseteq B_{r+\epsilon}(0)+W.(\mathrm{singsupp}(v)+B_\epsilon(0))$$
by Theorem \ref{Support} (ii). Since $\epsilon>0$ was arbitrary, the claim holds.
\item[(ii)] In this case, we conclude from Theorem \ref{Support} (iii) that
$$\mathrm{singsupp}_k(u*_kv) \subseteq \overline{\mathrm{co}(W.A_{\epsilon})+\mathrm{co}(W.B_{\epsilon})}$$
with
\begin{align*}
A_{\epsilon}:= \mathrm{singsupp}_k u+B_\epsilon(0), \\
B_{\epsilon}:= \mathrm{suppsupp}_k v+B_\epsilon(0).
\end{align*}
Since $\epsilon>0$ was arbitrary, the claim follows.
\end{enumerate}
\item[(b)] For arbitrary $u,v \in \mathcal{D}(\mathfrak{a})$ and $R>0$, choose a cutoff function $\chi \in C_c^\infty(\mathfrak{a})$ with $\chi \equiv 1$ on $B_{2R}(0).$
Then on easily obtains from Theorem \ref{Support} (ii) that
$$u*_kv|_{B_R(0)} =(\chi u)*_k(\chi v)|_{B_{R}(0)}.$$
Application of step (a) above to the distributions $\chi u, \chi v$ finishes the proof, because $R>0$ was chosen arbitrarily.
\end{enumerate}
\end{proof}

\section{Hypoellipticity of elliptic Dunkl operators}\label{Hypoellipticity}
Let $\Omega \subseteq \mathfrak{a}$ be open. In contrast to the usual meaning, we say for $u \in \mathcal{D}'(\Omega)$ and a set $M$ of locally integrable functions that
$$u \in M \te{ iff } u=u_f^k \te{ with } f \in M.$$
This means that for all $\varphi \in C_c^\infty(\Omega)$ we have
$$\braket{u,\varphi}=\int_{\Omega} \varphi(x)f(x)\omega(x)\; \mathrm{d}x \quad \te{ with } f \in M.$$
It is important to keep this in mind. In particular, $u \in C^\infty(\mathfrak{a})$ means that $u$ is given (in the usual sense) by $f\omega$ with some $f \in C^\infty(\mathfrak{a})$. Hence, $u \in C^\infty(\mathfrak{a})$ does not mean that $u$ is a smooth function in the usual sense, since $\omega$ is in general not smooth along $\mathfrak{a}_{\mathrm{sing}}$. But, as already mentioned, our notion is adapted to the Dunkl setting with the advantage that things are getting similar to the usual theory of elliptic differential operators. \\
Recall that the Dunkl transform of a tempered distribution $u \in \mathscr{S}'(\mathfrak{a})$ is defined as
$$\braket{\widehat{u}^k,f}\coloneqq \braket{\mathcal{F}_ku,f}\coloneqq \braket{u,\widehat{f}^{\,k}}, \quad f \in \mathscr{S}(\mathfrak{a}),$$
so that for $f \in \mathscr{S}(\mathfrak{a})$ we have 
$$\mathcal{F}_ku_f^k = u_{\mathcal{F}_kf}^k.$$

In this section we are interested in the study of Dunkl operators, which are elliptic in the following sense.

\begin{definition}
A Dunkl operator $p(T)$ is called elliptic of degree $m\in \N_0$ if $p=\sum_{n = 0}^m p_n$ with $p_n \in \mathcal{P}_n$ satisfies $p_m(x) \neq 0$ for all $x \in \mathfrak{a}\backslash\set{0}$.
\end{definition}

For $k=0$, an elliptic Dunkl operator is nothing but an elliptic differential operator with constant coefficients. For instance, the Dunkl Laplacian
$$\Delta_k\coloneqq \braket{T,T}=\Delta_{\mathfrak{a}}+\sum\limits_{\alpha \in R} k_\alpha \left( \frac{\partial_\alpha}{\braket{\alpha,\m}} - \frac{|\alpha|^2}{2}\frac{1-s_\alpha}{\braket{\alpha,\m}^2}\right),$$
where $\Delta_{\mathfrak{a}}$ is the Laplacian on $\mathfrak{a}$, is an elliptic Dunkl operator of degree $2$. \\
As usual, we put
$$\braket{x}\coloneqq \sqrt{1+|x|^2} $$
as function on $\mathfrak{a}$.

\begin{proposition}\label{DifferentationByFourierTrafo}
Consider $f \in L^1(\mathfrak{a},\omega)$ such that $x\mapsto\braket{x}^\ell f(x) \in L^1(\mathfrak{a},\omega)$ for some fixed $\ell \in \N_0$. Then,  $\widehat{f}^{\,k} \in C^\ell(\mathfrak{a})$.
\end{proposition}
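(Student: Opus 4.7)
The plan is to differentiate $\ell$ times under the integral sign in
$$\widehat{f}^{\,k}(\xi) = \frac{1}{c_k}\int_{\mathfrak{a}} E(-i\xi, x) f(x) \omega(x)\,\mathrm{d}x.$$
Pick arbitrary directions $\xi_1, \ldots, \xi_m \in \mathfrak{a}$ with $m \le \ell$; to show that $\partial_{\xi_1}\cdots\partial_{\xi_m}\widehat{f}^{\,k}$ exists and is continuous, it suffices to obtain a pointwise bound of the form $|\partial_{\xi_1}\cdots\partial_{\xi_m} E(-i\xi, x)| \le C\braket{x}^m$ with $C$ independent of $\xi$, so that the integrand is dominated by an $L^1$-function.

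First I would establish such a bound via Rösler's integral representation of the intertwining operator. The probability measure $\mu_x^k$ satisfying $V\varphi(x) = \int \varphi\,\mathrm{d}\mu_x^k$ has support contained in $\mathrm{co}(W.x) \subseteq B_{|x|}(0)$, since $W$ preserves the norm. Applying $V$ to $y\mapsto e^{-i\braket{\xi,y}}$ yields
$$E(-i\xi, x) = V^y\bigl(e^{-i\braket{\xi, y}}\bigr)(x) = \int_{\mathfrak{a}} e^{-i\braket{\xi, y}}\,\mathrm{d}\mu_x^k(y).$$
Since $\mu_x^k$ has compact support, differentiating under this integral is legitimate and gives
$$\partial_{\xi_1}\cdots\partial_{\xi_m} E(-i\xi, x) = (-i)^m\int_{\mathfrak{a}} \braket{y,\xi_1}\cdots\braket{y,\xi_m}\, e^{-i\braket{\xi, y}}\,\mathrm{d}\mu_x^k(y),$$
so Cauchy-Schwarz combined with $|y|\le|x|$ on the support of $\mu_x^k$ produces
$$|\partial_{\xi_1}\cdots\partial_{\xi_m} E(-i\xi, x)| \le |\xi_1|\cdots|\xi_m|\,|x|^m \le |\xi_1|\cdots|\xi_m|\braket{x}^\ell.$$

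With this bound, I would apply dominated convergence in the $\xi$-variable: for each $m \le \ell$ the integrand $\partial_{\xi_1}\cdots\partial_{\xi_m} E(-i\xi,x) f(x)\omega(x)$ is dominated by $|\xi_1|\cdots|\xi_m|\braket{x}^\ell |f(x)|\omega(x)$, which lies in $L^1(\mathfrak{a},\mathrm{d}x)$ by hypothesis. This justifies
$$\partial_{\xi_1}\cdots\partial_{\xi_m}\widehat{f}^{\,k}(\xi) = \frac{1}{c_k}\int_{\mathfrak{a}}\partial_{\xi_1}\cdots\partial_{\xi_m} E(-i\xi, x)\,f(x)\omega(x)\,\mathrm{d}x,$$
and continuity of the right-hand side in $\xi$ follows from a further dominated convergence argument, using that $\xi\mapsto\partial_{\xi_1}\cdots\partial_{\xi_m}E(-i\xi,x)$ is continuous for each $x$, dominated by the same $L^1$ majorant.

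The main obstacle is the derivative bound for $E$; everything else is routine. In the classical case ($k=0$) one has $E(-i\xi, x) = e^{-i\braket{\xi, x}}$ and the estimate is trivial, but for general $k$ the key input is the compact support of Rösler's measure $\mu_x^k$ in $\mathrm{co}(W.x)$, which replaces the plane-wave identity and supplies the polynomial growth of the $\xi$-derivatives of $E$ in the $x$-variable.
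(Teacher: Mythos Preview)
Your proposal is correct and follows essentially the same approach as the paper: both reduce the claim to differentiation under the integral sign, justified by the estimate $|\partial_\xi^\beta E(-ix,\xi)|\le |x|^{|\beta|}$. The only difference is that the paper simply cites this bound from \cite[Proposition 2.6]{R03}, whereas you re-derive it via R\"osler's integral representation $E(-i\xi,x)=\int e^{-i\braket{\xi,y}}\,\mathrm{d}\mu_x^k(y)$ together with $\supp\,\mu_x^k\subseteq\mathrm{co}(W.x)\subseteq B_{|x|}(0)$---which is in fact how the cited estimate is proven.
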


\begin{proof}
For $\ell=0$, this is just the Riemann-Lebesgue Lemma for the Dunkl transform, cf. Lemma \ref{DunklTrafo}. Otherwise, it is a consequence of usual theorems on differentiable parameter integrals and the estimate 
$$|\partial_\xi^\beta E_k(-ix,\xi)| \le |x|^{|\beta|} \le \braket{x}^\ell, \quad \te{ for all }\beta \in \N_0^n,\, |\beta|\le \ell,$$
where the first inequality can be found in \cite[Proposition 2.6]{R03}.
\end{proof}

Let $a \in C^\infty(\mathfrak{a})$ be a smooth function and $m \in \R$. Assume that for all $\beta \in \N_0^n$ there exists a constant $C_\beta \ge 0$ with
\begin{equation}\label{ClassicalSymbol}
\abs{\partial^\beta a (x)}\le C_\beta \braket{x}^{m-\abs{\beta}},
\end{equation}
then it is well known that the (distributional) Fourier transform of $a$ has singular support contained in $\set{0}$, cf. \cite[Proof of Theorem 7.1.22]{H03}. The following lemma is a generalization of this to the case of arbitrary $k\ge 0$.

\begin{lemma}\label{DunklTrafoOfSymbolds}
Assume that for $a \in C^\infty(\mathfrak{a})$ there exists some $m \in \R$ such that
\begin{equation}\label{SymbolClass}
|T^\beta a(x)| \le C_\beta\braket{x}^{m-|\beta|}
\end{equation}
for some constants $C_\beta$ and all $x \in \mathfrak{a},\beta \in \N_0^n$. Then
$$\mathrm{singsupp}_k(\mathcal{F}_ku_a^k) \subseteq \set{0}.$$
\end{lemma}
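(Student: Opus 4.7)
The plan is to reduce to the classical scheme by which one shows that a symbol has Fourier transform smooth away from the origin. Concretely, given $x_0 \neq 0$, one wants to produce a $C^\infty$ function $f$ defined in a neighborhood of $x_0$ such that $\mathcal{F}_k u_a^k$ equals $u_f^k$ there. The device for this will be multiplication by $|x|^{2N}$: on any open set avoiding $0$, this is invertible, and on the Dunkl side it corresponds to applying a power of the Dunkl Laplacian, which will gain decay thanks to the symbol estimate \eqref{SymbolClass}.

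First, I would check that the tempered distribution $u_a^k$ is well defined (since $a\omega$ is of polynomial growth) and that the intertwining properties from Lemma \ref{DunklTrafo}(iv) extend by duality to $\mathscr{S}'(\mathfrak{a})$, giving $T_\xi u_a^k = u_{T_\xi a}^k$ and $\mathcal{F}_k T_\xi u = m_{i\xi}\mathcal{F}_k u$ for all tempered $u$. Iterating yields
\[
(-|x|^2)^N \mathcal{F}_k u_a^k = \mathcal{F}_k\bigl(\Delta_k^N u_a^k\bigr) = \mathcal{F}_k u_{\Delta_k^N a}^k \qquad (N \in \N_0).
\]
Next, the estimate \eqref{SymbolClass} forces $|\Delta_k^N a(x)| \le C_N\braket{x}^{m-2N}$; choosing $N$ large compared to the growth of $\omega$ and to a prescribed $\ell \in \N_0$, we obtain $\braket{x}^\ell \Delta_k^N a \in L^1(\mathfrak{a},\omega)$. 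Proposition \ref{DifferentationByFourierTrafo} then ensures that $g_N \coloneqq \mathcal{F}_k(\Delta_k^N a) \in C^\ell(\mathfrak{a})$, and the identity above becomes
\[
(-|x|^2)^N\,\mathcal{F}_k u_a^k \;=\; u_{g_N}^k \qquad \text{in } \mathscr{S}'(\mathfrak{a}).
\]

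Finally, I would pass to the open set $\Omega \coloneqq \mathfrak{a}\setminus\{0\}$, on which $|x|^{2N}$ is a nowhere vanishing smooth (in the ordinary sense) function. Dividing the previous identity by $(-|x|^2)^N$ gives
\[
\mathcal{F}_k u_a^k\big|_{\Omega} \;=\; u_{f_N}^k \big|_{\Omega}, \qquad f_N \coloneqq (-1)^N\,|x|^{-2N} g_N \in C^\ell(\Omega).
\]
Uniqueness of the representing function (since $\omega \neq 0$ on a dense subset of $\Omega$) forces all the $f_N$ to coincide on $\Omega$ as $\ell$ (and thus $N$) varies; letting $\ell \to \infty$ we conclude $f_N \in C^\infty(\Omega)$. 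Hence $\mathcal{F}_k u_a^k$ is of the form $u_f^k$ with $f \in C^\infty(\Omega)$, so $\mathrm{singsupp}_k(\mathcal{F}_k u_a^k) \subseteq \{0\}$ as claimed.

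The only genuinely delicate step is the extension of the intertwining relations $\mathcal{F}_k T_\xi = m_{i\xi}\mathcal{F}_k$ and $T_\xi u_a^k = u_{T_\xi a}^k$ from Schwartz functions to the tempered distribution $u_a^k$; this is routine but has to be verified by pairing against Schwartz test functions using the skew-symmetry of $T_\xi$ in $L^2(\mathfrak{a},\omega)$. The rest is bookkeeping: choosing $N$ large enough in terms of $\ell$, $m$ and the weight exponent $\sum_{\alpha \in R} k_\alpha$, and recognising that on $\Omega$ the factor $|x|^{2N}$ is harmless.
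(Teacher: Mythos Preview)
Your proof is correct and follows essentially the same scheme as the paper's: both use the symbol estimate \eqref{SymbolClass} to place $T^\beta a$ (for $|\beta|$ large) in a weighted $L^1$ space, invoke Proposition \ref{DifferentationByFourierTrafo} to get $C^\ell$-regularity of $\mathcal{F}_k(T^\beta a)$, and then divide by the corresponding polynomial factor on $\mathfrak{a}\setminus\{0\}$ to conclude. The only cosmetic difference is that you work with the single operator $\Delta_k^N$ (hence the globally invertible factor $|x|^{2N}$ on $\mathfrak{a}\setminus\{0\}$), whereas the paper uses all monomial operators $T^\beta$ with $|\beta|\ge N$ and the factors $(ix)^\beta$; your choice is slightly tidier but not a genuinely different argument.
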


\begin{proof}
Note that $u_a^k$ is tempered, as $a$ is at most of polynomial growth.
For each $\ell \in \N_0$, we can find $N \in \N$ such that
$$x\mapsto \braket{x}^\ell \m T^\beta a(x) \in L^1(\mathfrak{a},\omega_k) \te{ for all } \beta \in \N_0^n, \, |\beta|\ge N.$$
By Proposition \ref{DifferentationByFourierTrafo}, we obtain that
$$\mathcal{F}_k(T^\beta a) \in C^\ell(\mathfrak{a}),$$
so that as distributions
$$(ix)^\beta \mathcal{F}_ku_a^k = \mathcal{F}_k(T^\beta u_a^k) \in C^\ell(\mathfrak{a}).$$
Therefore, we can conclude that
$$(\mathcal{F}_ku_a^k)|_{\mathfrak{a}\backslash \set{0}} \in C^\ell(\mathfrak{a}\backslash\set{0}).$$
But $\ell \in \N_0$ was arbitrary, so $\mathrm{singsupp}_k(\mathcal{F}_ku_a^k)\subseteq \set{0}$.
\end{proof}

Assume that $a \in C^\infty(\mathfrak{a})$ satisfies \eqref{ClassicalSymbol} and $\abs{a(x)} \ge C\abs{x}^m$ for large $x$ and some constant $C$. Then the reciprocal $\frac{1}{a}$ satisfies \eqref{ClassicalSymbol} with $-m$ instead of $m$. This is an immediate consequence of the quotient rule for partial derivatives. Dunkl operators does not have a general Leibniz rule, so there is no quotient rule. To avoid this, we can use the subsequent lemma.

\begin{proposition}\label{DunklDerivativeRationalFunctions}
Let $f=\frac{q}{p} \in C^\infty(\Omega)$ be a rational function on some $W$-invariant open $\Omega \subseteq \mathfrak{a}$, i.e. $p,q \in \mathcal{P}$. Then, $T_\xi(k)f$ is rational on $\Omega$ for all $\xi \in \mathfrak{a}$. To be more precise, there exist finitely many polynomials $\tilde{q_i}\in \mathcal{P}$ of degree $\deg\, p + \deg\, q-1$ and $w_i \in W$, such that for all $x \in \Omega$
$$T_\xi f(x)=\sum_{i} \frac{\tilde{q_i}(x)}{p(x)p(w_ix)}.$$
\end{proposition}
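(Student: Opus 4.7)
The plan is a direct computation that decomposes $T_\xi f$ according to the two pieces of the definition: the derivative part $\partial_\xi f$ and the reflection-difference terms $k_\alpha\braket{\alpha,\xi}\frac{f(x)-f(s_\alpha x)}{2\braket{\alpha,x}}$. Each piece will be rewritten as a single rational function with a denominator of the desired form $p(x)p(wx)$ and a numerator polynomial of the correct degree.

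For the derivative part, the classical quotient rule gives
\[
\partial_\xi\!\left(\tfrac{q}{p}\right)=\frac{p\,\partial_\xi q - q\,\partial_\xi p}{p^{2}}=\frac{\tilde q_0(x)}{p(x)\,p(e\cdot x)},
\]
where $\tilde q_0:=p\,\partial_\xi q - q\,\partial_\xi p$ is polynomial of degree $\deg p+\deg q-1$ (the leading terms have the right degree and the cancellation only lowers it). This handles the $w_0=e$ term.

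For the reflection term corresponding to $\alpha\in R$, I would clear denominators on $f(x)-f(s_\alpha x)$:
\[
f(x)-f(s_\alpha x)=\frac{q(x)p(s_\alpha x)-q(s_\alpha x)p(x)}{p(x)\,p(s_\alpha x)}.
\]
The key observation is that the numerator $N_\alpha(x):=q(x)p(s_\alpha x)-q(s_\alpha x)p(x)$ vanishes identically on the hyperplane $\braket{\alpha,x}=0$, because there $s_\alpha x=x$. Since $\braket{\alpha,\cdot}$ is an irreducible polynomial, $N_\alpha$ is divisible by $\braket{\alpha,x}$ in $\mathcal{P}$; hence $N_\alpha(x)=\braket{\alpha,x}\,M_\alpha(x)$ for a polynomial $M_\alpha$. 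Dividing by $\braket{\alpha,x}$ therefore removes the apparent singularity and leaves
\[
\frac{f(x)-f(s_\alpha x)}{\braket{\alpha,x}}=\frac{M_\alpha(x)}{p(x)\,p(s_\alpha x)},
\]
where $\deg M_\alpha=\deg N_\alpha-1=\deg p+\deg q-1$. Setting $\tilde q_\alpha:=\tfrac12 k_\alpha\braket{\alpha,\xi}\,M_\alpha$ and $w_\alpha:=s_\alpha$ gives one term of the required form.

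Summing the $w_0=e$ contribution and the $w_\alpha=s_\alpha$ contributions for $\alpha\in R$ yields the stated expression $T_\xi f(x)=\sum_i \tilde q_i(x)/\big(p(x)p(w_i x)\big)$ valid on $\Omega$ (where all denominators are nonzero by hypothesis). The only nontrivial point is the divisibility statement $\braket{\alpha,x}\mid N_\alpha(x)$, which I expect to be the main subtlety; it follows from the factoriality of the polynomial ring together with irreducibility of $\braket{\alpha,\cdot}$, or alternatively by writing $p(s_\alpha x)=p(x)+\braket{\alpha,x}\cdot r_p(x)$ and $q(s_\alpha x)=q(x)+\braket{\alpha,x}\cdot r_q(x)$ with polynomial remainders $r_p,r_q$ (Taylor expansion along the reflection direction), which makes the factor $\braket{\alpha,x}$ explicit and also confirms the degree count.
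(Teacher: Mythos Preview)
Your proof is correct and follows essentially the same route as the paper: apply the quotient rule to the $\partial_\xi$ term to get the $w_0=e$ contribution with numerator $p\,\partial_\xi q-q\,\partial_\xi p$, clear denominators in each difference term to obtain $N_\alpha(x)=q(x)p(s_\alpha x)-q(s_\alpha x)p(x)$ over $p(x)p(s_\alpha x)$, and then use that $N_\alpha$ vanishes on $\alpha^\perp$ to divide out $\braket{\alpha,x}$. The paper's proof is exactly this computation, only more tersely written; your additional remarks on the degree count and the alternative Taylor-expansion justification of the divisibility are extra detail, not a different argument.
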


\begin{proof}
Rewriting the difference part of the Dunkl operator, we observe the following
\begin{align*}
T_\xi f(x) &=\frac{\partial_\xi q(x)\m p(x) - q(x)\m \partial_\xi p(x)}{p(x)^2} \\
&\quad + \sum\limits_{\alpha \in R_+}  \frac{k_\alpha \braket{\alpha,\xi}}{p(x)p(s_\alpha x)} \frac{q(x)p(s_\alpha x)-q(s_\alpha x)p(x)}{\braket{\alpha,x}}.
\end{align*}
But the polynomial $q(x)p(s_\alpha x)-q(s_\alpha x)p(x)$ vanishes on $\alpha^\perp$, hence it is divisible by $\braket{\alpha,x}$ and the claim holds.
\end{proof}

\begin{lemma}\label{PolynomialsAreSymbols}
Let $p \in \mathcal{P}$ be a polynomial of degree $m \in \N_0$. Then:
\begin{enumerate}
\item[\rm (i)] $|T^\beta p(x)|\le C_\beta \braket{x}^{m-|\beta|}$ for all $\beta \in \N_0^n$ and some constant $C_\beta \ge 0$.
\item[\rm (ii)] If $p=\sum_{j=0}^m p_j$ with $p_j$ homogeneous of degree $j$ and $p_m(x)\neq 0$ for all $x \in \mathfrak{a}\backslash\set{0}$, then there exists some $R>0$ and $q \in C^\infty(\mathfrak{a})$ with $pq\equiv 1$ on $\mathfrak{a}\backslash B_R(0)$. Moreover, 
\begin{equation}\label{SymbolReziprocPoly}
|T^\beta q(x)|\le C_\beta \braket{x}^{-m-|\beta|}
\end{equation}
for all $\beta \in \N_0^n$ and some constant $C_\beta$.
\end{enumerate}
\end{lemma}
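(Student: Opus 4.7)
Part (i) is immediate: since $T_\xi(\mathcal{P}_n) \subseteq \mathcal{P}_{n-1}$ for every $n$, iteration yields that $T^\beta p$ is a polynomial of degree at most $m - |\beta|$ (and vanishes when $|\beta| > m$), hence is globally dominated by $C_\beta \langle x\rangle^{m - |\beta|}$.

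For (ii), I first construct $q$. Ellipticity and homogeneity of $p_m$, combined with compactness of the unit sphere, give $|p_m(x)| \geq c|x|^m$ for some $c > 0$; absorbing the lower-order terms of $p$ yields $R_0 > 0$ with $|p(x)| \geq (c/2)|x|^m$ for $|x| \geq R_0$. Fix a smooth radial (hence $W$-invariant) cutoff $\chi$ with $\chi \equiv 0$ on $B_{R_0}(0)$ and $\chi \equiv 1$ outside $B_R(0)$, where $R \coloneqq 2R_0$, and set $q \coloneqq \chi/p$ (understood as $0$ where $\chi = 0$). Then $q \in C^\infty(\mathfrak{a})$ and $pq \equiv 1$ outside $B_R(0)$, as required.

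The heart of (ii) is the estimate \eqref{SymbolReziprocPoly}. The main obstacle is the absence of a Leibniz or quotient rule for Dunkl operators; this is exactly what Proposition \ref{DunklDerivativeRationalFunctions} is designed to bypass, since it preserves the ``rational function whose denominator is a product $\prod p(wx)$'' structure under a single application of $T_\xi$. Because $|wx|=|x|$ for $w \in W$, every reflection $s_\alpha x$ of a point $x$ with $|x|\geq R$ still lies outside $B_R(0)$, so $q$ and $1/p$ coincide on all $W$-orbits there; inductively on $|\beta|$, this gives $T^\beta q(x) = T^\beta(1/p)(x)$ for $|x| \geq R$, where on the right $1/p$ is regarded as an element of $C^\infty(\Omega)$ on the $W$-invariant open set $\Omega \coloneqq \{|x| > R_0\}$.

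I will then prove by induction on $|\beta|$ the structural claim that $T^\beta(1/p)$ is a finite sum of rational functions $Q_j/P_j$ on $\Omega$, with $P_j(x) = \prod_{i=1}^{N_j} p(w_{j,i} x)$ for certain $w_{j,i} \in W$ (a polynomial of degree $N_j m$ nowhere vanishing on $\Omega$) and $\deg Q_j - N_j m \leq -m - |\beta|$. The base case $1/p$ is trivial. For the inductive step apply Proposition \ref{DunklDerivativeRationalFunctions} directly to $Q_j/P_j$: the resulting summands have denominators $P_j(x) P_j(wx)$ for various $w \in W$, again products of $p$ at Weyl translates (of length $2N_j$), and numerators of degree $\deg P_j + \deg Q_j - 1$; hence $\deg Q - Nm$ drops by exactly $1$. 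Using $|p(wx)| \geq (c/2)|x|^m$ uniformly in $w \in W$ for $|x| \geq R$, we obtain $|Q_j(x)/P_j(x)| = O(|x|^{\deg Q_j - N_j m}) = O(|x|^{-m - |\beta|})$. Finally, $T^\beta q$ is continuous and hence bounded on the compact set $\{|x| \leq R\}$, where $\langle x\rangle^{-m - |\beta|}$ is bounded below by a positive constant, so after enlarging $C_\beta$ the estimate \eqref{SymbolReziprocPoly} extends to all of $\mathfrak{a}$.
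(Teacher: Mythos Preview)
Your proof is correct and follows essentially the same route as the paper: both derive the lower bound $|p(x)|\ge \tfrac{c}{2}|x|^m$ outside a ball, iterate Proposition~\ref{DunklDerivativeRationalFunctions} to express $T^\beta(1/p)$ as a finite sum of rational functions with denominators $\prod_i p(w_i x)$, and then read off the decay $\braket{x}^{-m-|\beta|}$ from the resulting degree count. The only cosmetic differences are that the paper computes the explicit factor count $2^{|\beta|}$ and numerator degree $(2^{|\beta|}-1)m-|\beta|$, whereas you track only the relevant quantity $\deg Q_j - N_j m$; and you make explicit (via the radial, hence $W$-invariant, cutoff and the identity $|wx|=|x|$) why $T^\beta q$ agrees with $T^\beta(1/p)$ outside $B_R(0)$, a point the paper leaves implicit.
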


\begin{proof}
\
\begin{enumerate}
\item This is obvious, since $T_\xi$ is homogeneous of degree $-1$ and 
$$|x^\alpha|\le |x|^{|\alpha|} \le \braket{x}^{|\beta|}$$
for all $\alpha,\beta \in \N_0$ with $|\alpha|\le|\beta|$.
\item Since $p_m(x)$ is homogeneous of degree $m$, there exists some $c>0$ with $p_m(x)\ge c|x|^m$. Therefore,  
\begin{equation}\label{LowerBoundEllipticPolynomials}
|p(x)|>\tfrac{c}{2}|x|^m
\end{equation} 
for all $x \in \mathfrak{a}\backslash B_R(0)$ and some large $R>0$. Then, choose $q \in C^\infty(\mathfrak{a})$ with $q=\tfrac{1}{p}$ on $\mathfrak{a}\backslash B_R(0)$. By iteration of Proposition \ref{DunklDerivativeRationalFunctions}, we can find for $\beta \in \N_0^n$ finitely many polynomials $\tilde{q_i} \in \mathcal{P}$ of degree 
$$\deg \tilde{q_i} = (2^{|\beta|}-1)\deg p - |\beta|=(2^{|\beta|}-1)m-|\beta|$$
and $w_{i,j}\in W, j=1,\ldots ,2^{|\beta|}$ satisfying
$$(T^\beta q)(x) = \sum\limits_i \frac{\tilde{q_i}(x)}{p(w_{i,1}x)p(w_{i,2}x)\cdots p(w_{i,2^{|\beta|}}x)},$$
for all $x \in \mathfrak{a}\backslash B_R(0)$. By part (i) and estimate \eqref{LowerBoundEllipticPolynomials}, there exists $C'_\beta \ge 0$ with
$$|(T^\beta q)(x)| \le C'_\beta \frac{\braket{x}^{(2^{|\beta|}-1)m-|\beta|}}{\braket{x}^{2^{|\beta|}\cdot m}} = C_\beta' \braket{x}^{-m-|\beta|},$$
for all $x \in \mathfrak{a}\backslash B_R(0)$.
Finally, as $q$ is continuous, estimate \eqref{SymbolReziprocPoly} follows.
\end{enumerate}
\end{proof}

The subsequent proof of the theorem on hypoellipticity follows basically the classical ideas as in \cite[Theorem 7.1.22]{H03}. Recall that for any distribution $u\in \mathcal{D}'(\mathfrak{a})$ we have $\delta_0*_k u=u$.

\begin{theorem}[Hypoellipticity]\label{EllipticRegularityI}
Let $p(T)$ be an elliptic Dunkl operator and $\Omega \subseteq \mathfrak{a}$ a $W$-invariant open subset. Then for $u \in \mathcal{D}'(\Omega)$, we have
$$W.\mathrm{singsupp}_k u = W.\mathrm{singsupp}_k(p(T)u).$$
\end{theorem}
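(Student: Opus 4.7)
The plan is to prove the two inclusions separately, the non-trivial one via a parametrix argument in the spirit of \cite[Theorem 7.1.22]{H03}. For the easy inclusion $W.\mathrm{singsupp}_k(p(T)u) \subseteq W.\mathrm{singsupp}_k u$, I would set $V := \Omega \setminus W.\mathrm{singsupp}_k u$, which is open and $W$-invariant, and patch the local representations $u|_{U_i} = u_{f_i}^k$ coming from the definition of $\mathrm{singsupp}_k$ into a global $f \in C^\infty(V)$: on $U_i \cap U_j$ one has $(f_i - f_j)\omega = 0$ a.e., so $f_i = f_j$ on the dense regular set and hence everywhere by smoothness. Then $p(T)u|_V = u_{p(T)f}^k$ forces $\mathrm{singsupp}_k(p(T)u) \cap V = \emptyset$, and the inclusion follows.

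For the reverse inclusion I would construct a parametrix $E \in \mathscr{S}'(\mathfrak{a})$ satisfying $p(T)E = \delta_0 + \omega$ with $\omega \in C^\infty(\mathfrak{a})$ and $\mathrm{singsupp}_k E \subseteq \{0\}$. By Lemma \ref{DunklTrafo}(iv) one has $\mathcal{F}_k p(T) = \tilde{p}\,\mathcal{F}_k$ with $\tilde{p}(x) := p(ix)$, whose top homogeneous part $i^m p_m$ still does not vanish on $\mathfrak{a}\setminus\{0\}$. Lemma \ref{PolynomialsAreSymbols}(ii) then provides $\tilde{q} \in C^\infty(\mathfrak{a})$ with $\tilde{p}\tilde{q} \equiv 1$ outside some ball and $|T^\beta \tilde{q}(x)| \le C_\beta \langle x\rangle^{-m-|\beta|}$. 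Setting $E := c_k^{-1}\mathcal{F}_k^{-1}(u_{\tilde{q}}^k)$, one computes $\mathcal{F}_k(p(T)E) = c_k^{-1} u_{\tilde{p}\tilde{q}}^k = \mathcal{F}_k\delta_0 + c_k^{-1}u_\psi^k$ with $\psi := \tilde{p}\tilde{q}-1 \in C_c^\infty(\mathfrak{a})$; since $\mathcal{F}_k^{-1}$ preserves $\mathscr{S}(\mathfrak{a})$, this places $\omega := p(T)E - \delta_0$ in $C^\infty(\mathfrak{a})$ (in the $k$-sense), and Lemma \ref{DunklTrafoOfSymbolds} gives $\mathrm{singsupp}_k E \subseteq \{0\}$.

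To exploit $E$, let $x_0 \in \Omega$ satisfy $W.x_0 \cap \mathrm{singsupp}_k(p(T)u) = \emptyset$ and pick a $W$-invariant cutoff $\chi \in C_c^\infty(\Omega)$ equal to $1$ on a $W$-invariant open neighborhood $V$ of $W.x_0$ with $V \cap \mathrm{singsupp}_k(p(T)u) = \emptyset$. Put $v := \chi u \in \mathcal{E}'(\mathfrak{a})$. The $W$-invariance of $\chi$ makes the Leibniz formula used in the proof of Theorem \ref{PropertiesDunklConvolution} iterate cleanly, so $p(T)v = \chi p(T)u + r$ where every summand in $r$ carries at least one derivative of $\chi$ and hence vanishes on $V$; thus $\mathrm{singsupp}_k(p(T)v) \cap V = \emptyset$. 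By Theorem \ref{PropertiesDunklConvolution}(i),(iii),
\begin{equation*}
v = \delta_0 *_k v = (p(T)E) *_k v - \omega *_k v = E *_k (p(T)v) - \omega *_k v.
\end{equation*}
Lemma \ref{DifferentConvolutions}(iii) yields $\omega *_k v \in C^\infty(\mathfrak{a})$, and Theorem \ref{Singularsupport}(i), applied with $\mathrm{singsupp}_k E \subseteq B_r(0)$ for each $r>0$, gives $\mathrm{singsupp}_k(E *_k p(T)v) \subseteq W.\mathrm{singsupp}_k(p(T)v)$, which is closed, $W$-invariant, and disjoint from $V$. Hence $\mathrm{singsupp}_k v \cap V = \emptyset$, and since $v = u$ on $V$ we conclude $W.x_0 \cap \mathrm{singsupp}_k u = \emptyset$.

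The main obstacle throughout is the absence of a general Leibniz (and hence quotient) rule for Dunkl operators. This is what compels the use of a $W$-invariant cutoff $\chi$, so that the restricted Leibniz formula of Theorem \ref{PropertiesDunklConvolution}'s proof applies cleanly, and what forces the indirect derivation of symbol estimates for $\tilde{q}$ via Lemma \ref{PolynomialsAreSymbols}(ii) — itself based on Proposition \ref{DunklDerivativeRationalFunctions} — in lieu of a naive quotient rule.
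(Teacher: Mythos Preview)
Your proposal is correct and follows essentially the same parametrix approach as the paper: construct $E$ as the inverse Dunkl transform of a smooth reciprocal of the symbol via Lemma~\ref{PolynomialsAreSymbols}(ii), show the remainder $p(T)E-\delta_0$ is smooth, and then combine the convolution identity $\chi u = E*_k(p(T)(\chi u)) + (\text{smooth})$ with Theorem~\ref{Singularsupport}. The only cosmetic differences are that you spell out the easy inclusion and the $W$-invariance of the cutoff more explicitly (and you should rename your remainder, since $\omega$ already denotes the weight function in this paper).
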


\begin{proof}
Let $m$ be the degree of $p$. 
By Lemma \ref{PolynomialsAreSymbols}, we choose $q \in C^\infty(\mathfrak{a})$ with $p(-i\m)q\equiv 1$ on $\mathfrak{a}\backslash B_R(0)$ for some large $R>0$ and such that for all $x \in \mathfrak{a}$ and $\beta \in \N_0^n$
$$|T^\beta q(x)| \le C_\beta\braket{x}^{-m-|\beta|}$$
with some constant $C_\beta\ge 0$. Thus, Lemma \ref{DunklTrafoOfSymbolds} leads to a tempered distribution $E\coloneqq \frac{1}{c_k}\mathcal{F}_k^{-1}u_q^k \in \mathscr{S}'(\mathfrak{a})$ with $k$-singular support contained in $\set{0}$, where $c_k$ is the Macdonald-Metha constant \eqref{MacdonaldMetha}. We put
$$R\coloneqq \delta_0-p(T)E \in \mathscr{S}'(\mathfrak{a}).$$
The Dunkl transform of $R$ is
$$\mathcal{F}_kR = \frac{1}{c_k}u_1^k-\mathcal{F}_k(p(T)E) = \frac{1}{c_k}(u_1^k-u_{p(-i\m)q}^k) = u_{f}^k$$
with $c_k\m f=1-p(-i\m)q \in C_c^\infty(\mathfrak{a})$. Therefore, $R=u_{\mathcal{F}_k^{-1}f}^k$ and $\mathrm{singsupp}_kR=\emptyset$. \\
For $x_0 \in \Omega\backslash W.\mathrm{singsupp}_k(p(T)u)$ choose a cutoff function $\chi \in C_c^\infty(\mathfrak{a})$ such that $\chi \equiv 1$ in a neighborhood of $W.x_0$. We consider $\chi u$ as a compactly supported distribution on $\mathfrak{a}$ by extending it by $0$ outside of $\Omega$. Then
\begin{equation}\label{KeyConvolution}
\chi u = \delta_0 *_k (\chi u) = (p(T)E+R)*_k(\chi u) = E*_k(p(T)(\chi u)) + R*_k(\chi u).
\end{equation}
Since $\mathrm{singsupp}_k R=\emptyset$ and $\mathrm{singsupp}_k E \subseteq \set{0}$, we can use Theorem \ref{Singularsupport} to obtain
$$\mathrm{singsupp}_k(\chi u) = \mathrm{singsupp}_k(E*_k(p(T)(\chi u))) \subseteq W.\mathrm{singsupp}_k(p(T)(\chi u)).$$
But $\chi \equiv 1$ in a neighborhood of $W.x_0$, so that $p(T)\chi u = p(T)u$ and $\chi u = u$ near $W.x_0$ and therefore $x_0 \notin \mathrm{singsupp}_k u$. From this we have
$$\mathrm{singsupp}_k u \subseteq W.\mathrm{singsupp}_k (p(T)u).$$
Finally, it is obvious that $\mathrm{singsupp}_k(p(T)u) \subseteq W.\mathrm{singsupp}_ku$, which finishes the proof.
\end{proof}

\section{Elliptic regularity of Dunkl operators}
First, we give a review of Dunkl-type Sobolev spaces and their properties as studied in \cite{MT04}.
\begin{definition}
For $s \in \R$ the Dunkl-type Sobolev space of order $s$ is defined by
$$H_k^s(\mathfrak{a}) \coloneqq \set{u \in \mathscr{S}'(\mathfrak{a})\mid \braket{x}^s\widehat{u}^k \in L^2(\mathfrak{a},\omega)}.$$
Hence, $u \in H_k^s(\mathfrak{a})$ iff $\widehat{u}^k \in \braket{x}^{-s}L^2(\mathfrak{a},\omega)$ and we identity $\widehat{u}^k$ with the function $f \in \braket{x}^{-s}L^2(\mathfrak{a},\omega)$ such that $\widehat{u}^k=u_f^k$. Under this identification, the inner product on $H_k^s(\mathfrak{a})$ is defined as
$$\braket{u,v}_{H_k^s}\coloneqq \int_{\mathfrak{a}}\braket{x}^{2s}\widehat{u}^k(x)\overline{\widehat{v}^k(x)}\omega(x)\;\mathrm{d}x.$$
\end{definition}

For our purpose, we need the following results from \cite{MT04}.

\begin{theorem}\label{Sobolev}
The Sobolev spaces $H_k^s(\mathfrak{a})$ are Hilbert spaces, satisfying
\begin{enumerate}
\item[\rm (i)] Let $s \in \N_0$. Then up to identification of $u_f^k$ with $f$,
$$H_k^s(\mathfrak{a})=\set{f \in L^2(\mathfrak{a},\omega) \mid T^\alpha f \in L^2(\mathfrak{a},\omega) \te{ for all } \alpha \in \N_0^n,|\alpha|\le s}.$$
Moreover, an equivalent norm on $H_k^s(\mathfrak{a})$ is induced by the inner product
$$(f,g)\mapsto \sum\limits_{|\alpha|\le s} \int_{\mathfrak{a}} (T^\alpha f)(x)\overline{(T^\alpha g)(x)}\omega(x)\; \d x.$$
\item[\rm (ii)] Dunkl operators are continuous linear operators $T_\xi:H_k^s(\mathfrak{a}) \to H_k^{s-1}(\mathfrak{a})$.
\item[\rm (iii)] For $\psi \in C_c^\infty(\mathfrak{a})$, $u \mapsto \psi u$ is a continuous map from $H_k^s(\mathfrak{a})$ into itself.
\item[\rm (iv)] $\mathcal{E}'(\mathfrak{a}) \subseteq \bigcup_{s \in \R} H_k^s(\mathfrak{a})$.
\item[\rm (v)] For $p \in \N$ and $s \in \R$ with $s>p+\tfrac{n}{2}+\tfrac{1}{2}\sum_{\alpha \in R} k_\alpha$, the identification $u_f^k\mapsto f$ yields a continuous embedding
$$H_k^s(\mathfrak{a}) \hookrightarrow C^p(\mathfrak{a}),$$
by identifying $u_f^k \mapsto f$, is a continuous embedding.
\end{enumerate}
\end{theorem}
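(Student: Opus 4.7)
The plan is to prove each of (i)--(v) by transferring everything to the Dunkl transform side, where the intertwining identity $\mathcal{F}_k T_\xi = m_{i\xi}\mathcal{F}_k$ from Lemma \ref{DunklTrafo}(iv) converts Dunkl differentiation into multiplication by polynomials on $L^2(\mathfrak{a},\omega)$. Since $\mathcal{F}_k$ is unitary on $L^2(\mathfrak{a},\omega)$, the map $u\mapsto\widehat{u}^{\,k}$ identifies $H_k^s(\mathfrak{a})$ isometrically with $\braket{x}^{-s}L^2(\mathfrak{a},\omega)$, so each $H_k^s(\mathfrak{a})$ is automatically a Hilbert space by transport of structure.

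For (i), the intertwining identity gives $\widehat{T^\alpha f}^{\,k}(x)=(ix)^\alpha\widehat{f}^{\,k}(x)$, so Plancherel yields
\[
\sum_{|\alpha|\le s}\nrm{T^\alpha f}_{L^2(\mathfrak{a},\omega)}^2=\int_\mathfrak{a}\Big(\sum_{|\alpha|\le s}x^{2\alpha}\Big)|\widehat{f}^{\,k}(x)|^2\omega(x)\dif x,
\]
and the elementary two-sided bound $\braket{x}^{2s}\asymp\sum_{|\alpha|\le s}x^{2\alpha}$ for integer $s\ge 0$ identifies this with $\nrm{f}_{H_k^s}^2$ up to constants. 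Part (ii) is immediate:
\[
\nrm{T_\xi u}_{H_k^{s-1}}^2=\int_\mathfrak{a}\braket{x}^{2(s-1)}|\braket{\xi,x}|^2|\widehat{u}^{\,k}|^2\omega\dif x\le|\xi|^2\nrm{u}_{H_k^s}^2.
\]
For (iv), any $u\in\mathcal{E}'(\mathfrak{a})$ is tempered and, by the bounds on $E$ and its derivatives recalled in the proof of Lemma \ref{DunklTrafoOfSymbolds}, the evaluation $\widehat{u}^{\,k}(\xi)=c_k^{-1}\braket{u_x,E(-i\xi,x)}$ is a smooth function with polynomial growth of order at most $\mathrm{ord}(u)$; hence $\braket{x}^s\widehat{u}^{\,k}\in L^2(\mathfrak{a},\omega)$ for $s$ sufficiently negative.

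The main obstacle is (iii), because there is no genuine Leibniz rule for $T_\xi$. The plan is to establish, by symmetric expansion of $(\psi f)(x)-(\psi f)(s_\alpha x)$, the reflection-twisted identity
\[
T_\xi(\psi f)(x)=\psi(x)T_\xi f(x)+(\partial_\xi\psi)(x)f(x)+\tfrac12\sum_{\alpha\in R}k_\alpha\braket{\alpha,\xi}\frac{\psi(x)-\psi(s_\alpha x)}{\braket{\alpha,x}}f(s_\alpha x).
\]
For $\psi\in C_c^\infty(\mathfrak{a})$ all coefficients are smooth and uniformly bounded (the divided differences $(\psi-\psi\circ s_\alpha)/\braket{\alpha,\cdot}$ by Taylor's theorem across $\alpha^\perp$), and composition with $s_\alpha$ preserves the norm of $L^2(\mathfrak{a},\omega)$. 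Iterating this identity together with (i) gives $\nrm{\psi u}_{H_k^s}\le C_\psi\nrm{u}_{H_k^s}$ for every integer $s\ge 0$. For negative integer $s$, the Dunkl--Plancherel pairing identifies $(H_k^s)^*\cong H_k^{-s}$, and the estimate transfers by passing to the adjoint of multiplication by $\overline{\psi}$; complex interpolation then handles non-integer exponents.

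Finally, for (v), the Dunkl inversion $u(x)=c_k^{-1}\int_\mathfrak{a}E(ix,\xi)\widehat{u}^{\,k}(\xi)\omega(\xi)\dif\xi$, together with the bound $|\partial_x^\beta E(ix,\xi)|\le|\xi|^{|\beta|}$ recalled in the proof of Proposition \ref{DifferentationByFourierTrafo}, permits differentiation under the integral. Cauchy--Schwarz with weight $\omega$ gives
\[
|\partial^\beta u(x)|\le c_k^{-1}\nrm{u}_{H_k^s}\left(\int_\mathfrak{a}|\xi|^{2|\beta|}\braket{\xi}^{-2s}\omega(\xi)\dif\xi\right)^{1/2},
\]
and the integral on the right converges exactly when $2s-2|\beta|>n+\sum_{\alpha\in R}k_\alpha$, i.e.\ for all $|\beta|\le p$ under the stated hypothesis. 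Dominated convergence applied to the inversion integral then delivers continuity of each $\partial^\beta u$ and the continuous embedding $H_k^s(\mathfrak{a})\hookrightarrow C^p(\mathfrak{a})$.
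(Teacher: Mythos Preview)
The paper does not prove this theorem; it merely quotes the results from \cite{MT04} (see the sentence preceding the statement). Your proposal is a correct and self-contained reconstruction of the standard arguments, and is essentially how these facts are established in \cite{MT04} and the surrounding literature.

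Two minor remarks. In (iv), the estimate you actually need is $|\partial_x^\alpha E(-i\xi,x)|\le|\xi|^{|\alpha|}$, i.e.\ derivatives in the distribution variable $x$ with polynomial growth in the transform variable $\xi$; this is the symmetric counterpart of the bound recalled in Proposition~\ref{DifferentationByFourierTrafo} (also \cite[Proposition~2.6]{R03}), obtained from $E(z,w)=E(w,z)$. In (iii), when you iterate the twisted Leibniz identity you will meet terms of the form $T_\eta\bigl(g\cdot(f\circ s_\alpha)\bigr)$; here the equivariance $T_\eta(f\circ s_\alpha)=(T_{s_\alpha\eta}f)\circ s_\alpha$ is what keeps the bookkeeping under control, and since the $W$-action is isometric on $L^2(\mathfrak{a},\omega)$ the estimates go through unchanged. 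With these clarifications your sketch is complete.
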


In order to formulate the theorem on elliptic regularity, we actually need local Sobolev spaces in the following sense.
\begin{definition}
For $W$-invariant open $\Omega \subseteq \mathfrak{a}$ and $s \in \R$ we define
$$H_{k,loc}^s(\Omega) \coloneqq \set{u \in \mathcal{D}'(\Omega) \mid \psi u \in H_k^s(\mathfrak{a}) \te{ for all } \psi \in C_c^\infty(\mathfrak{a})}.$$
\end{definition}

Note that $H_k^s(\mathfrak{a}) \subseteq H_k^t(\mathfrak{a})$ for $t\le s$. Thus $H_{k,loc}^s(\Omega) \subseteq H_{k,loc}^t(\Omega)$ and in particular 
$$H_{k}^0(\mathfrak{a})=L^2(\mathfrak{a},\omega)\; \textrm{ and } \; H_{k,loc}^0(\Omega)=L^2_{loc}(\Omega,\omega).$$
From \cite[Equation (2.13)]{OS05} we have the following result.

\begin{proposition}
Let $v \in \mathscr{S}'(\mathfrak{a})$ and $u \in \mathcal{E}'(\mathfrak{a})$ be a tempered and a compactly supported distributions, respectively. Then $\widehat{u}^k \in \mathscr{S}(\mathfrak{a})$, $u*_k v \in \mathscr{S}'(\mathfrak{a})$ and
$$(u*_k v)^{\wedge k}=\widehat{u}^k \cdot \widehat{v}^k,$$
where $\widehat{u}^k$ again is identified with $f \in \mathscr{S}(\mathfrak{a})$ such that $\widehat{u}^k=u_f^k$.
\end{proposition}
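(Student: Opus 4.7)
My plan is to address the three assertions of the proposition in sequence: smoothness and decay of $\widehat{u}^k$, temperedness of $u*_k v$, and the product formula.

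First, I would establish the pointwise representation
$$\widehat{u}^k(\xi)=\frac{1}{c_k}\,u_x\!\bigl(E(-i\xi,x)\bigr)$$
for $u\in\mathcal{E}'(\mathfrak{a})$ by testing the defining identity $\langle\widehat{u}^k,f\rangle=\langle u,\widehat{f}^{\,k}\rangle$ against $f\in\mathscr{S}(\mathfrak{a})$ and interchanging the action of $u$ with integration against $f\omega$. The interchange is legitimate because $u$ has compact support, $(x,\xi)\mapsto E(-i\xi,x)$ is jointly smooth, and $f\omega\in L^1(\mathfrak{a},\omega)$. Differentiation under the action of $u$ yields smoothness of $\widehat{u}^k$. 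For the Schwartz-type estimates, I would apply the intertwining identity $\mathcal{F}_k T_\xi=m_{i\xi}\mathcal{F}_k$ of Lemma~\ref{DunklTrafo}(iv) distributionally: since $T^\beta u\in\mathcal{E}'(\mathfrak{a})$ for every multi-index $\beta$, the same representation gives uniform-in-$\xi$ bounds for $\xi^\beta\widehat{u}^k(\xi)$, while the kernel bound $|\partial_\xi^\alpha E(-i\xi,x)|\le|x|^{|\alpha|}$ from \cite[Prop.~2.6]{R03} controls the derivatives.

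Second, to show that $u*_k v$ extends to $\mathscr{S}'(\mathfrak{a})$, I would extend the pairing \eqref{DefConv} continuously from $C_c^\infty(\mathfrak{a})$ to $\mathscr{S}(\mathfrak{a})$. Because $u$ is compactly supported, in \eqref{DefConv} one may take $\rho(x,y)=\chi(x)$ with $\chi\in C_c^\infty(\mathfrak{a})$ equal to $1$ in a neighborhood of $\supp u$, so the pairing reduces to $\langle u\otimes v,\chi(x)\tau_x\varphi(y)\rangle$. Continuity in $\varphi\in\mathscr{S}(\mathfrak{a})$ then follows from the factorization $\tau=(V\otimes V)\circ S\circ V^{-1}$ of Lemma~\ref{TranslationAsOperatorOfTwoVariables}, the continuity of $V$ and $V^{-1}$ on $C^\infty(\mathfrak{a})$, and compactness of $\supp\chi$ in the $x$-variable; together these imply that $\varphi\mapsto\chi(x)\tau_x\varphi(y)$ is continuous from $\mathscr{S}(\mathfrak{a})$ into $\mathscr{S}(\mathfrak{a}\times\mathfrak{a})$.

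Third, for the product formula I would argue by density. Lemma~\ref{DifferentConvolutions}(i) gives the identity when both factors are Schwartz. Put $u_\epsilon\coloneqq u*_k\psi_\epsilon\in C_c^\infty(\mathfrak{a})$, with $\psi_\epsilon$ as in the proof of Corollary~\ref{Density}; then $u_\epsilon\to u$ in $\mathscr{S}'(\mathfrak{a})$ with uniformly bounded supports. Choose Schwartz approximants $v_n\to v$ in $\mathscr{S}'(\mathfrak{a})$. The Schwartz identity $(u_\epsilon*_k v_n)^{\wedge k}=\widehat{u_\epsilon}^k\,\widehat{v_n}^k$ holds; passing first $n\to\infty$ using that $\widehat{u_\epsilon}^k$ is a Schwartz multiplier, and then $\epsilon\to 0$ using Proposition~\ref{Continuity} on the convolution side together with convergence $\widehat{u_\epsilon}^k\to\widehat{u}^k$ in the multiplier algebra on the transform side, yields the claimed identity.

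The main obstacle will be the first step: upgrading smoothness to the Schwartz-type decay of $\widehat{u}^k$ (rather than mere polynomial growth of all derivatives). In the Euclidean case one sidesteps this by invoking the Paley--Wiener theorem to identify $\widehat{u}$ as an entire function of exponential type; in the Dunkl setting the analogous Paley--Wiener statement is less transparent, and one must argue directly via the intertwining identities and the kernel bounds of \cite{R03}. Additional care is needed because Dunkl derivatives do not satisfy a general Leibniz rule, so the estimates must be propagated through the operators $T^\beta$ rather than through ordinary partial derivatives.
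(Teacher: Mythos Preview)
The paper does not give its own proof of this proposition; it is quoted from \cite[Equation~(2.13)]{OS05} with no further argument. So your direct proof already goes beyond what the paper itself supplies, and there is no paper-side argument to compare strategies with.

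That said, your first step cannot be completed as written, and the reason is that the statement contains a misprint rather than a difficulty: the claim $\widehat{u}^{\,k}\in\mathscr{S}(\mathfrak{a})$ is false for general $u\in\mathcal{E}'(\mathfrak{a})$. Already $u=\delta_0$ gives $\widehat{u}^{\,k}=1/c_k$, a nonzero constant. The assertion actually needed (and the one that makes $\widehat{u}^{\,k}\cdot\widehat{v}^{\,k}$ a tempered distribution) is that $\widehat{u}^{\,k}$ is a smooth \emph{multiplier} on $\mathscr{S}(\mathfrak{a})$, i.e.\ every $\partial^\alpha\widehat{u}^{\,k}$ has at most polynomial growth. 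Your own computation via $\xi^\beta\widehat{u}^{\,k}=(-i)^{|\beta|}(T^\beta u)^{\wedge k}$ delivers exactly this and no more: pairing the compactly supported distribution $T^\beta u$ against $x\mapsto E(-i\xi,x)$ costs a power of $\langle\xi\rangle$ equal to the order of $T^\beta u$, so you obtain $|\xi^\beta\partial^\alpha\widehat{u}^{\,k}(\xi)|\le C_{\alpha,\beta}\langle\xi\rangle^{N_{\alpha,\beta}}$, not Schwartz decay. The ``main obstacle'' you identify is therefore not a gap in your method but a wrong target; with the target corrected to the multiplier property, your step~1 is complete.

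Steps~2 and~3 are then essentially sound. One refinement in step~2: you do not need the map $\varphi\mapsto\chi(x)\tau_x\varphi(y)$ to land in $\mathscr{S}(\mathfrak{a}\times\mathfrak{a})$. It is cleaner to write
\[
\langle u*_kv,\varphi\rangle=\bigl\langle u_x,\ \chi(x)\,\langle v_y,\tau_x\varphi(y)\rangle\bigr\rangle,
\]
use that $\tau_x$ acts continuously on $\mathscr{S}(\mathfrak{a})$ and depends smoothly on $x$ (Lemma~\ref{Translation}), so that $x\mapsto\langle v,\tau_x\varphi\rangle$ is smooth with the required seminorm control, and then pair with $u\in\mathcal{E}'(\mathfrak{a})$ in $x$.
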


As an immediate corollary to this proposition, we obtain the following.

\begin{corollary}\label{SobolevConvolution}
Suppose that $a \in C^\infty(\mathfrak{a})$ satisfies estimate \eqref{SymbolClass} for some $m \in \R$. Then for all $v \in H_k^s(\mathfrak{a})\cap \mathcal{E}'(\mathfrak{a})$ we have
$$v*_ku_a^k \in H_k^{s+m}(\mathfrak{a}).$$
\end{corollary}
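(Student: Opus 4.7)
My plan is to invoke the preceding proposition directly. Since $v \in \mathcal{E}'(\mathfrak{a})$ is compactly supported and $a$ has at most polynomial growth, $u_a^k$ lies in $\mathscr{S}'(\mathfrak{a})$; the proposition then yields $v *_k u_a^k \in \mathscr{S}'(\mathfrak{a})$ together with the Fourier factorization
$$(v *_k u_a^k)^{\wedge k} = \widehat{v}^k \cdot \widehat{u_a^k}^k,$$
where $\widehat{v}^k$ is identified with a Schwartz function on $\mathfrak{a}$.

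The second step is to realize this tempered-distribution product as $u_h^k$ for a concrete function $h$ on $\mathfrak{a}$ and to estimate its weighted $L^2$-norm. Under the $\omega$-weighted embedding $f \mapsto u_f^k$ together with the Dunkl Plancherel identity, the multiplier $\widehat{u_a^k}^k$ corresponds, in frequency, to an operation controlled pointwise by the symbol $a$ (via \eqref{SymbolClass}); combining this with the Schwartz regularity of $\widehat{v}^k$ yields an identification of $h$ so that the symbol bound translates into weighted $L^2$-control of $\braket{\xi}^{s+m}h$. The $H^s_k$-hypothesis $\braket{\xi}^s \widehat{v}^k \in L^2(\mathfrak{a},\omega)$, together with a weighted Cauchy--Schwarz inequality, then gives $\braket{\xi}^{s+m}h \in L^2(\mathfrak{a},\omega)$, which is by definition $v *_k u_a^k \in H^{s+m}_k(\mathfrak{a})$.

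The main obstacle I anticipate is the identification of $\widehat{u_a^k}^k$ as a concrete pointwise multiplier: because $a$ is only a smooth symbol (not in $L^1(\mathfrak{a},\omega)$ in general), its Dunkl transform is a priori only a tempered distribution, potentially singular at the origin, so realizing $\widehat{v}^k \cdot \widehat{u_a^k}^k$ as an embedded function requires simultaneously using the Schwartz regularity of $\widehat{v}^k$ (which smoothes out possible singularities) and the full strength of the symbol estimate on $a$ for all multi-indices $\beta$, in the same spirit as the $\mathrm{singsupp}_k$-analysis carried out in Lemma \ref{DunklTrafoOfSymbolds} and the parameter-integral estimates of Proposition \ref{DifferentationByFourierTrafo}. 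Modulo this identification, the final Sobolev estimate is a routine weighted Cauchy--Schwarz argument.
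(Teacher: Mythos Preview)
Your plan contains a genuine gap, and it stems from taking the printed statement too literally.

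You correctly apply the preceding proposition to obtain $(v*_k u_a^k)^{\wedge k}=\widehat v^{\,k}\cdot \widehat{u_a^k}^{\,k}$, and you correctly flag that $\widehat{u_a^k}^{\,k}=\mathcal F_k u_a^k$ is only a tempered distribution. But your proposed remedy --- to use ``Schwartz regularity of $\widehat v^{\,k}$'' to smooth out the singularity and then read off a pointwise bound ``controlled by $a$'' --- does not work. First, for $v\in\mathcal E'(\mathfrak a)$ the Dunkl transform $\widehat v^{\,k}$ is smooth of polynomial growth, \emph{not} Schwartz (e.g.\ $\widehat{\delta_0}^{\,k}$ is the constant $c_k^{-1}$); it therefore cannot neutralize the singularity of $\mathcal F_k u_a^k$ at the origin guaranteed by Lemma~\ref{DunklTrafoOfSymbolds}. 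Second, even when it can, there is no mechanism by which the size of $\mathcal F_k u_a^k$ at a point $\xi$ is governed pointwise by $a(\xi)$; these live on opposite sides of the transform. A concrete counterexample to the statement as printed: take $a\equiv 1$, a symbol of order $m=0$. Then $\widehat{u_1^k}^{\,k}=c_k\delta_0$, so $(v*_ku_1^k)^{\wedge k}=c_k\,\widehat v^{\,k}(0)\,\delta_0$, which lies in no $H_k^t$ unless $\widehat v^{\,k}(0)=0$. No amount of singular-support analysis rescues this.

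The paper calls the result an ``immediate corollary'' because the intended object of convolution is the one whose Dunkl transform equals $a$, namely $\mathcal F_k^{-1}u_a^k$ (this is exactly how it is used in the proof of Theorem~\ref{EllipticRegularityII}, with $E=c_k^{-1}\mathcal F_k^{-1}u_q^k$). With that reading the proposition gives $(v*_k\mathcal F_k^{-1}u_a^k)^{\wedge k}=a\cdot\widehat v^{\,k}$ as a function, and the one-line estimate
\[
\bigl\|\langle\xi\rangle^{s-m}\,a\,\widehat v^{\,k}\bigr\|_{L^2(\omega)}
\le C_0\,\bigl\|\langle\xi\rangle^{s}\,\widehat v^{\,k}\bigr\|_{L^2(\omega)}
\]
from $|a(\xi)|\le C_0\langle\xi\rangle^{m}$ finishes the proof (in the application $a=q$ has order $-m$, giving the gain to $H_k^{s+m}$). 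So the ``obstacle'' you anticipated is not meant to be overcome at all; once you convolve with the correct distribution, the frequency-side multiplier is $a$ itself and the argument is a single pointwise bound, not a Cauchy--Schwarz or singular-support argument.
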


\begin{theorem}[Elliptic regularity]\label{EllipticRegularityII}
Let $p(T)$ be an elliptic Dunkl operator of degree $m$. Then for each $W$-invariant open $\Omega \tm \mathfrak{a}$ and $u \in \mathcal{D}'(\Omega)$
$$p(T)u \in H_{k,\te{loc}}^s(\Omega) \te{ if and only if } u \in H_{k,\te{loc}}^{s+m}(\Omega).$$
\end{theorem}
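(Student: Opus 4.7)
The plan is to establish each implication separately, with the converse leaning heavily on the parametrix constructed inside the proof of Theorem \ref{EllipticRegularityI}.

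The implication $u \in H_{k,\te{loc}}^{s+m}(\Omega) \Rightarrow p(T)u \in H_{k,\te{loc}}^{s}(\Omega)$ is the easy one. I would fix $\psi \in C_c^\infty(\Omega)$ and choose a $W$-invariant $\psi_1 \in C_c^\infty(\Omega)$ with $\psi_1\equiv 1$ on a $W$-invariant open neighborhood $V$ of $\mathrm{supp}\,\psi$. Then $\psi_1 u \in H_k^{s+m}(\mathfrak{a})$ by hypothesis, while $(1-\psi_1)u$ is supported in $\mathfrak{a}\setminus V$, a $W$-invariant closed set. The crucial point is that $T_\eta$ preserves the property ``supported outside $V$'' whenever $V$ is $W$-invariant: the partial derivative preserves supports, $W$-invariance of $V$ makes $s_\alpha$ preserve $\mathfrak{a}\setminus V$, and the difference quotient $(v-v\circ s_\alpha)/\braket{\alpha,\cdot}$ inherits the property. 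By induction $T^\alpha((1-\psi_1)u)$ is supported outside $V$, so $\psi T^\alpha u=\psi T^\alpha(\psi_1 u)$ for every multi-index $\alpha$, and Theorem \ref{Sobolev} (ii), (iii) yield $\psi T^\alpha u \in H_k^{s+m-|\alpha|}\tm H_k^s$ for $|\alpha|\le m$.

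For the converse, I would recycle the parametrix $E \in \mathscr{S}'(\mathfrak{a})$ from the proof of Theorem \ref{EllipticRegularityI} with $p(T)E=\delta_0 - R$, $\widehat E^{\,k}=c_k^{-1}u_q^k$ where $|T^\beta q(x)|\le C_\beta\braket{x}^{-m-|\beta|}$, and $R = u_g^k$ with $g \in \mathscr{S}(\mathfrak{a})$. The identity
$$\chi u = E*_k p(T)(\chi u) + R*_k(\chi u)$$
for $W$-invariant $\chi \in C_c^\infty(\Omega)$ drives a bootstrap fuelled by three ingredients. First, for $v \in H_k^{s'}(\mathfrak{a})\cap \mathcal{E}'(\mathfrak{a})$, one has $E*_k v \in H_k^{s'+m}(\mathfrak{a})$: taking Dunkl transforms gives $(E*_kv)^{\wedge k}=c_k^{-1}q\,\widehat v^{\,k}$, and the pointwise bound $|q|\le C\braket{x}^{-m}$ together with $\widehat v^{\,k} \in \braket{x}^{-s'}L^2(\mathfrak{a},\omega)$ yield the claim. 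Second, $R*_k v$ lies in $\bigcap_t H_k^t(\mathfrak{a})$ for $v \in \mathcal{E}'(\mathfrak{a})$, because its Dunkl transform is the Schwartz function $\widehat g^{\,k}\cdot\widehat v^{\,k}$. Third and most importantly, a commutator estimate: for $W$-invariant $\chi \in C_c^\infty(\Omega)$, $[p(T),\chi]$ is continuous $H_k^{s'}(\mathfrak{a})\to H_k^{s'-m+1}(\mathfrak{a})$. The $W$-invariant Leibniz rule collapses $[T_\xi,\chi]$ to the pure multiplication $m_{\partial_\xi\chi}$, so the standard identity
$$[T_{\xi_1}\cdots T_{\xi_{|\alpha|}},\chi]=\sum_{i=1}^{|\alpha|}T_{\xi_1}\cdots T_{\xi_{i-1}}\,m_{\partial_{\xi_i}\chi}\,T_{\xi_{i+1}}\cdots T_{\xi_{|\alpha|}}$$
presents the commutator as a sum of compositions of at most $|\alpha|-1\le m-1$ Dunkl operators with multiplication by $\partial_{\xi_i}\chi \in C_c^\infty(\mathfrak{a})$, and Theorem \ref{Sobolev} (ii), (iii) finish the estimate.

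The bootstrap then runs as follows. Choose a nested family of $W$-invariant cutoffs $\chi_0,\chi_1,\ldots \in C_c^\infty(\Omega)$ with $\chi_{j+1}\equiv 1$ on an open neighborhood of $\mathrm{supp}\,\chi_j$. By Theorem \ref{Sobolev} (iv), $\chi_N u \in H_k^{s_0}(\mathfrak{a})$ for some $s_0 \in \R$, provided $N$ is chosen large. Inductively on $j$, one shows $\chi_{N-j}u \in H_k^{\min(s+m,\,s_0+j)}$: the support preservation argument from the first paragraph gives $[p(T),\chi_{N-j-1}]u=[p(T),\chi_{N-j-1}](\chi_{N-j}u)$, so the commutator estimate combined with $\chi_{N-j-1}p(T)u \in H_k^s$ places $p(T)(\chi_{N-j-1}u)$ in $H_k^{\min(s,\,s_0+j-m+1)}$, and the $E$-convolution fact then promotes this to $\chi_{N-j-1}u \in H_k^{\min(s+m,\,s_0+j+1)}$. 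After finitely many steps $\chi_0 u \in H_k^{s+m}(\mathfrak{a})$, and for a general (not necessarily $W$-invariant) $\chi \in C_c^\infty(\Omega)$ one picks $\chi_0$ $W$-invariant with $\chi_0\equiv 1$ on $\mathrm{supp}\,\chi$ and applies Theorem \ref{Sobolev} (iii) to $\chi u=\chi\cdot(\chi_0 u)$. I expect the commutator estimate to be the main obstacle, since it is the only place where the absence of a general Dunkl Leibniz rule becomes visible; the workaround is to insist on $W$-invariant cutoffs so that first-order commutators reduce to pure multiplications and the estimate propagates through composition via Theorem \ref{Sobolev} (iii).
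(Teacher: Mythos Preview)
Your argument is correct, but the hard direction proceeds differently from the paper. The paper avoids the bootstrap: with a single $W$-invariant $\chi\equiv 1$ on a neighborhood of $B_r(0)\supseteq\mathrm{supp}\,\psi$ it writes $p(T)(\chi u)=\chi\,p(T)u+v$, where $v\in\mathcal{E}'(\mathfrak{a})$ vanishes on that neighborhood of $B_r(0)$, and then --- rather than estimating $v$ in a Sobolev norm --- invokes Theorem~\ref{Singularsupport} to obtain $\mathrm{singsupp}_k(E*_kv)\subseteq W.\mathrm{supp}\,v\subseteq\mathfrak{a}\setminus B_r(0)$, so that $\psi(E*_kv)\in C_c^\infty(\mathfrak{a})\subseteq H_k^{s+m}$; the remaining piece $E*_k(\chi\,p(T)u)$ lands in $H_k^{s+m}$ in one shot via Corollary~\ref{SobolevConvolution}. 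Your route is the classical elliptic bootstrap transplanted to the Dunkl setting: it never touches the singular-support machinery of Section~\ref{Supports}, relying instead on the commutator identity $[T_\xi,\chi]=m_{\partial_\xi\chi}$ for $W$-invariant $\chi$ together with Theorem~\ref{Sobolev}~(ii),~(iii) to gain one order per step. The paper's approach is shorter and reuses the work already done for Theorem~\ref{Singularsupport}; yours is more self-contained modulo the parametrix. One small point to tighten: make the order of choices in the bootstrap explicit --- first fix an outer $W$-invariant $\chi_*\in C_c^\infty(\Omega)$ and determine $s_0$ from $\chi_* u\in\mathcal{E}'(\mathfrak{a})$ via Theorem~\ref{Sobolev}~(iv), and only then choose $N$ and the nested $\chi_j$ with supports inside $\{\chi_*\equiv 1\}$, so that $\chi_Nu=\chi_N(\chi_*u)\in H_k^{s_0}$ is guaranteed and no circularity between $N$ and $s_0$ arises.
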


\begin{proof}
Obviously, we only have to prove that $p(T)u \in H_{k,\te{loc}}^{s}(\Omega)$ implies that $u \in H_{k,\te{loc}}^{s+m}(\Omega)$. Let $\psi \in C_c^\infty(\Omega)$ with $\mathrm{supp}\, \psi \subseteq B_r(0)$ and choose a $W$-invariant $\chi \in C_c^\infty(\mathfrak{a})$ such that $\chi \equiv 1$ on a neighborhood of $B_r(0)$.
Recall the distributions $E,R$ from the proof of Theorem \ref{EllipticRegularityI}. 
Consider $\psi u$ and $\chi u$ as elements of $\mathcal{D}'(\mathfrak{a})$. We obtain similarly to Theorem \ref{EllipticRegularityI}, more precisely from \eqref{KeyConvolution}
\begin{equation}\label{ERCutoff}
\psi u = \psi (\chi u) = \psi \cdot (E*_k (p(T)(\chi u))) + \psi \cdot (R*_k (\chi u)).
\end{equation}
We discuss the terms on the right-hand side separately.
\begin{enumerate}
\item As in the proof of Theorem \ref{EllipticRegularityI}, we have $R*_k(\chi u) \in C^\infty(\mathfrak{a})$. Thus, $\psi(R*_k(\chi u)) \in C_c^\infty(\mathfrak{a})$ and in particular 
$$\psi \cdot (R*_k (\chi u)) \in H_k^t(\mathfrak{a})$$
for all $t \in \R.$
\item We claim that
\begin{equation}\label{ERCutoffDunklOp}
p(T)(\chi u)=\chi \cdot p(T)u + v,
\end{equation}
for some $v \in \mathcal{E}'(\mathfrak{a})$ with $v \equiv 0$ in a neighborhood of $B_r(0)$. To see this, consider any $\xi \in \mathfrak{a}$. The $W$-invariance of $\chi$ leads to
$$T_\xi (\chi u) = \chi \cdot (T_\xi u) + (\partial_\xi \chi) \cdot u.$$
Since $\chi \equiv 1$ in a neighborhood of $B_r(0)$, the last term (and all its images under Dunkl operators) vanishes on this neighborhood. Thus, if we iterate this argument, we obtain the stated equation \eqref{ERCutoffDunklOp}.
\item Decomposing $p(T)(\chi u)$ according to \eqref{ERCutoffDunklOp}, we see that
$$E*_k (p(T)(\chi u)) = E*_k(\chi \cdot p(T) u) + E*_k v.$$
By Theorem \ref{Singularsupport} and the fact $\mathrm{singsupp}_kE \subseteq \set{0}$ we get
$$\mathrm{singsupp}_k (E*_k v) \subseteq  W.\mathrm{singsupp}_k v \subseteq  W.\mathrm{supp}\, v \subseteq \mathfrak{a}\backslash B_r(0).$$
Therefore, by $\mathrm{supp} \; \psi \subseteq B_r(0)$, we conclude $\psi (E*_k v) \in C_c^\infty(\mathfrak{a})$ and thus
$$\psi(E*_kv) \in H_k^t(\mathfrak{a})$$
for all $t \in \R$. By assumption, $\chi p(T) u \in H_k^s(\mathfrak{a}) \cap \mathcal{E}'(\mathfrak{a})$, so that with Lemma \ref{SobolevConvolution} we have 
$$\psi(E*(\chi p(T) u))+\psi(E*(\chi p(T) u)) + \psi(E*_kv) \in H_k^{s+m}(\mathfrak{a}).$$
\end{enumerate}
Putting things from (i) and (iii) together, we obtain from \eqref{ERCutoff} that
$$\psi u \in H_k^{s+m}(\mathfrak{a})$$
for all $\psi \in C_c^\infty(\Omega)$, i.e. $u \in H_{k,loc}^{s+m}(\Omega)$.
\end{proof}

From Theorem \ref{EllipticRegularityII}, we obtain the following corollary.

\begin{corollary}
Consider some $W$-invariant open $\Omega \subseteq \mathfrak{a}$, $u \in \mathcal{D}'(\Omega)$ and an elliptic Dunkl operator $p(T)$. Then $u \in C^\infty(\Omega)$ in the following cases
\begin{enumerate}
\item[\rm (i)] $p(T)^m u \in L_{loc}^2(\Omega,\omega)$ for all $m \in \N_0$.
\item[\rm (ii)] $p(T)^m u \in C(\Omega)$ for all $m \in \N_0$.
\item[\rm (iii)] $u$ is an eigendistribution of $p(T)$.
\end{enumerate}
\end{corollary}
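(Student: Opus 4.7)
The plan is to reduce all three cases to showing that $u \in H^t_{k,loc}(\Omega)$ for every $t \in \R$; this forces $u \in C^\infty(\Omega)$ in the $k$-sense, because for any $\psi \in C_c^\infty(\Omega)$ one can pick $t$ so large that Theorem \ref{Sobolev}(v) produces a smooth representative of $\psi u$, and gluing these local representatives yields a single $f \in C^\infty(\Omega)$ with $u = u_f^k$. The key preparatory fact is that since Dunkl operators commute, $p(T)^m = (p^m)(T)$; if the leading homogeneous part $p_d$ of $p$ vanishes only at the origin, then so does $p_d^m$, so $p(T)^m$ is itself an elliptic Dunkl operator of degree $md$, to which Theorem \ref{EllipticRegularityII} applies.

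For case (i), I would directly apply Theorem \ref{EllipticRegularityII} to the elliptic operator $p(T)^m$ at Sobolev level $s=0$: the hypothesis $p(T)^m u \in L^2_{loc}(\Omega,\omega) = H^0_{k,loc}(\Omega)$ yields $u \in H^{md}_{k,loc}(\Omega)$, and letting $m \to \infty$ gives the required regularity. Case (ii) reduces to (i), since $\omega$ is continuous and therefore locally bounded, so that any $f \in C(\Omega)$ automatically defines an element of $L^2_{loc}(\Omega,\omega)$ in the $k$-sense.

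For case (iii), the relation $p(T)u=\lambda u$ gives no a priori Sobolev regularity for $u$, so I would bootstrap locally. Fix $x_0 \in \Omega$, choose a $W$-invariant open neighborhood $V$ of $W.x_0$ with $\overline{V} \subset \Omega$, and a cutoff $\psi_0 \in C_c^\infty(\Omega)$ with $\psi_0 \equiv 1$ on $V$. Then $\psi_0 u \in \mathcal{E}'(\mathfrak{a})$, so Theorem \ref{Sobolev}(iv) provides some $s_0 \in \R$ with $\psi_0 u \in H^{s_0}_k(\mathfrak{a})$; writing $\phi u = \phi \cdot (\psi_0 u)$ for arbitrary $\phi \in C_c^\infty(V)$ and invoking Theorem \ref{Sobolev}(iii) shows that $u \in H^{s_0}_{k,loc}(V)$. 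The eigenvalue equation then forces $p(T)u = \lambda u \in H^{s_0}_{k,loc}(V)$, and Theorem \ref{EllipticRegularityII} on the $W$-invariant set $V$ upgrades this to $u \in H^{s_0+d}_{k,loc}(V)$ (with $d = \deg p$); iterating yields $u \in H^{s_0+Nd}_{k,loc}(V)$ for every $N \in \N_0$, hence $u \in C^\infty(V)$, and since $x_0 \in \Omega$ was arbitrary, $u \in C^\infty(\Omega)$. The main obstacle is precisely this case: a starting rung for the Sobolev bootstrap is needed, and it is supplied not by the eigenvalue equation itself but by the nontrivial inclusion $\mathcal{E}'(\mathfrak{a}) \subseteq \bigcup_{s} H^{s}_k(\mathfrak{a})$ from Theorem \ref{Sobolev}(iv), without which the iteration could not be launched.
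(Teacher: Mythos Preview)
Your proofs of (i) and (ii) are essentially the same as the paper's: you correctly observe that $p(T)^m$ is elliptic of degree $md$, apply Theorem~\ref{EllipticRegularityII} at level $s=0$, and invoke the Sobolev embedding to conclude; for (ii) you reduce to (i) via $C(\Omega)\subseteq L^2_{loc}(\Omega,\omega)$, exactly as the paper does.

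For (iii), however, you take a genuinely different and more laborious route. The paper simply observes that $\tilde p(T)\coloneqq p(T)-\lambda$ is again an elliptic Dunkl operator (ellipticity depends only on the top-degree homogeneous part, which is unchanged by subtracting a constant) and that $\tilde p(T)u=0$. One line of hypoellipticity (Theorem~\ref{EllipticRegularityI}) then gives $W.\mathrm{singsupp}_k u = W.\mathrm{singsupp}_k(\tilde p(T)u)=\emptyset$, hence $u\in C^\infty(\Omega)$. Alternatively, and still without any bootstrap, one could note $\tilde p(T)^m u=0\in L^2_{loc}(\Omega,\omega)$ for all $m$ and quote case (i). Your Sobolev bootstrap is correct---the starting rung from Theorem~\ref{Sobolev}(iv) and the localization via Theorem~\ref{Sobolev}(iii) are handled carefully---but it is unnecessary machinery here: the ``obstacle'' you identify (finding an initial Sobolev exponent) disappears entirely once you absorb $\lambda$ into the operator rather than treating the eigenvalue equation as a constraint to iterate against.
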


\begin{proof}
\
\begin{enumerate}
\item[\rm (i)] We recall that $H_{k,loc}^0(\Omega)=L_{loc}^2(\Omega,\omega)$. Furthermore, we note that $p(T)^m$ is elliptic of degree $m \cdot \deg p$. Therefore, by Theorem \ref{EllipticRegularityI}, we have that $u \in H_{k,loc}^{m\cdot \deg p}(\Omega)$ for all $m \in \N$, i.e.
$$u \in \bigcap\limits_{s \in \R}H_{k,loc}^s(\Omega)=C^\infty(\Omega).$$
\item[\rm (ii)] This is obvious, since $C(\Omega) \tm L^2_{loc}(\Omega,\omega)$.
\item[\rm (iii)] Assume that $p(T)u=\lambda u$ for $\lambda \in \C$. But $\tilde{p}(T)\coloneqq p(T)-\lambda$ is an elliptic Dunkl operator satisfying $\tilde{p}(T)u=0$. Thus, by Theorem \ref{EllipticRegularityI} we conclude that $u \in C^\infty(\Omega)$.
\end{enumerate}
\end{proof}

\section{Convolution of type A Riesz distributions}
We conclude this paper with an application of the results from the previous sections, in particular the results from Section 3.
In this section we consider the root system
$$A_{n-1}\coloneqq \set{\pm (e_i-e_j) \mid 1 \le i < j \le n} \tm \R^n,$$
with the canonical basis $e_1,\ldots,e_n$ of $\R^n$. The associated finite reflection group is the symmetric group $\mathcal{S}_n$ on $n$-letters, acting on $\R^n$ by permutation of the coordinates. As there is only one $\mathcal{S}_n$-orbit in $A_{n-1}$, a multiplicity $k\ge 0$ consists of a single parameter. \\
Dunkl theory of type $A$ is closely related to radial analysis on symmetric cones, see for instance \cite{BF98,R07,R20,BR23}. In his unpublished manuscript \cite{M13} from 1987/88, Macdonald studied generalizations of results in radial analysis on symmetric cones, where spherical polynomials of the symmetric cone are replaced by Jack polynomials with arbitrary index, and by introducing a suitable Laplace transform. In contrast to Macdonald, the authors in \cite{BF98} studied the case of non-symmetric Jack polynomials and pointed out the connection to type $A$ Dunkl theory. For the precise connection to the theory of symmetric cones, the reader is referred to \cite{R07,R20,BR23}. \\
The Riesz distribution $R_\mu \in \mathscr{S}'(\R^n)$ for $\mu> \mu_0 \coloneqq k(n-1)$, associated to $(A_{n-1},k)$, is defined as the positive tempered distribution
$$\braket{R_\mu,f}\coloneqq \frac{1}{\Gamma_n(\mu)}\int_{\R_+^n} f(x)\Delta(x)^{\mu-\mu_0-1}\omega(x)\dif x,$$
where $\R_+^n\coloneqq (0,\infty)^n$, $\Delta(x)=x_1\cdots x_n$ and $\Gamma_n(\mu)$ is the generalized gamma function
$$\Gamma_n(\mu)\coloneqq \frac{c_k}{(2\pi)^{\frac{n}{2}}}\prod\limits_{j=1}^n \Gamma(\mu-k(j-1)).$$
The following results were proven in \cite{R20}:
\begin{enumerate}
\item[\rm (i)] $\mu \mapsto R_\mu$ extends to a (weak) holomorphic map $\C \to \mathscr{S}'(\R^n)$.
\item[\rm (ii)] $\Delta(T)R_\mu = R_{\mu-1}$.
\item[\rm (iii)] $\Delta\cdot R_\mu = \prod_{j=1}^n (\mu-k(j-1)) \cdot R_{\mu+1}$.
\item[\rm (iv)] $\supp \, R_\mu \tm \overline{\R_+^n}$.
\item[\rm (v)] $R_0=\delta_0$.
\item[\rm (vi)] $R_\mu$ is a positive measure iff and only if $\mu$ is contained in the generalized Wallach set
$$W_k\coloneqq \set{0,k,\ldots,k(n-1)} \cup (k(n-1),\infty).$$
\end{enumerate}
Riesz distributions on a symmetric cone define a group of tempered distributions under convolution, which is still an open question for Dunkl type Riesz distributions. Indeed, it remained open so far whether two Riesz distributions can be convolved. We shall prove the following theorem.

\begin{theorem}
For $\mu,\nu \in \C$, the Riesz distributions $R_\mu, R_\nu$ are $\mathcal{S}_n$-convolvable and
$$R_\mu*_kR_\nu =R_{\mu+\nu}.$$
\end{theorem}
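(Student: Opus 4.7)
The plan is to proceed by analytic continuation in $(\mu,\nu)\in\C^2$, in three steps: (1) establish $\mathcal{S}_n$-convolvability, (2) prove joint holomorphy of both sides as $\mathcal{D}'(\R^n)$-valued maps, and (3) verify the identity on a subset of $\C^2$ rich enough to force it everywhere.

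For (1), property (iv) yields $\mathrm{supp}\,R_\mu\subseteq\overline{\R_+^n}$ for real $\mu>\mu_0$. For any $\varphi\in C_c^\infty(\R^n\setminus\overline{\R_+^n})$, the map $\mu\mapsto\braket{R_\mu,\varphi}$ is entire by property (i) and vanishes on $(\mu_0,\infty)$, hence identically; so $\mathrm{supp}\,R_\mu\subseteq\overline{\R_+^n}$ for every $\mu\in\C$. Since $\overline{\R_+^n}$ is a proper $\mathcal{S}_n$-invariant closed convex cone, the second case in the definition of $W$-convolvability applies and gives the desired $\mathcal{S}_n$-convolvability of $R_\mu$ and $R_\nu$.

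For (2), I would fix $\varphi\in C_c^\infty(\R^n)$ and choose an $\mathcal{S}_n\times\mathcal{S}_n$-invariant cutoff $\rho\in C^\infty(\R^n\times\R^n)$ with support in an $\epsilon$-neighborhood of $\overline{\R_+^n}\times\overline{\R_+^n}$ and $\rho\equiv 1$ on a smaller one, this choice being independent of $(\mu,\nu)$. By Lemma \ref{TranslationAsOperatorOfTwoVariables}, $\rho\cdot\tau\varphi\in C_c^\infty(\R^n\times\R^n)$, so Definition \ref{DefConv} reads
$$\braket{R_\mu*_k R_\nu,\varphi}=\braket{R_\mu\otimes R_\nu,\rho\cdot\tau\varphi}.$$
Weak joint holomorphy of $R_\mu\otimes R_\nu$, which follows from factorwise weak holomorphy via iterated integration combined with a Hartogs-type argument, forces the left-hand side to be jointly entire in $(\mu,\nu)$; the right-hand side $\braket{R_{\mu+\nu},\varphi}$ is trivially entire.

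For (3), the base case $R_0*_k R_\nu=\delta_0*_k R_\nu=R_\nu$ holds for every $\nu\in\C$ by property (v) and Theorem \ref{PropertiesDunklConvolution}(iii). Applying the shift operator $\Delta(T)$ to both sides and using both property (ii) and Theorem \ref{PropertiesDunklConvolution}(i) (which lets $T_\xi$ pass across $*_k$), induction gives $R_{-m}*_k R_\nu=R_{\nu-m}$ for all $m\in\N_0$ and $\nu\in\C$. The main obstacle is that vanishing of the entire function $h(\mu,\nu):=\braket{R_\mu*_k R_\nu-R_{\mu+\nu},\varphi}$ on the countable union of lines $-\N_0\times\C$ does not by itself imply $h\equiv 0$. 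To close this gap I would verify the identity directly on the open tube $\{\mathrm{Re}\,\mu,\mathrm{Re}\,\nu>\mu_0\}$, where both $R_\mu$ and $R_\nu$ are represented by locally integrable functions; the cleanest route is via a type $A$ Dunkl–Laplace transform $L$ on distributions supported in $\overline{\R_+^n}$ which satisfies $L(u*_k v)=L(u)\cdot L(v)$ together with the explicit evaluation $L(R_\mu)(\xi)=\Delta(\xi)^{-\mu}$ (up to normalization), obtainable from the Macdonald/Baker–Forrester integral formula for the Dunkl kernel on the positive chamber. Joint holomorphy from step (2) then propagates the identity from this nonempty open subset to all of $\C^2$.
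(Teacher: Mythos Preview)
Your proposal is correct, and your step (3) ultimately lands on exactly the same core computation as the paper: on the tube $\{\mathrm{Re}\,\mu,\mathrm{Re}\,\nu>\mu_0\}$ the Riesz distributions are positive measures in a suitable class, the Dunkl--Laplace transform is multiplicative under $*_k$ (this is the paper's Lemma~7.3, whose proof hinges on the factorization $\tau_x(e^{-\braket{\underline{s},\cdot}}f)(y)=e^{-\braket{\underline{s},x+y}}\tau_xf(y)$ coming from $\underline{1}\perp A_{n-1}$), and $\mathcal{L}R_\mu=\Delta^{-\mu}$ is quoted from R\"osler.

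The genuine difference is in how one passes from the tube to all of $\C^2$. You set up joint holomorphy of $(\mu,\nu)\mapsto\braket{R_\mu*_kR_\nu,\varphi}$ via a fixed cutoff $\rho$ and a Hartogs argument, and then invoke the identity theorem. The paper sidesteps this entirely: it uses $\Delta(T)R_\mu=R_{\mu-1}$ together with $T_\xi(u*_kv)=(T_\xi u)*_kv$ to write, for arbitrary $\mu,\nu$ and $N$ large,
\[
R_\mu*_kR_\nu=\Delta(T)^{2N}\bigl(R_{\mu+N}*_kR_{\nu+N}\bigr)=\Delta(T)^{2N}R_{\mu+\nu+2N}=R_{\mu+\nu},
\]
reducing directly to the measure case. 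This is shorter and avoids any two-variable analyticity bookkeeping; on the other hand, your route would survive in settings where a convenient shift operator is not available. Your detour through $R_{-m}*_kR_\nu=R_{\nu-m}$ is unnecessary once you have the Laplace-transform identity on the open tube, and your re-proof of $\mathrm{supp}\,R_\mu\subseteq\overline{\R_+^n}$ is redundant since property~(iv) already asserts this for all $\mu\in\C$.
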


\begin{definition}
For $s \in \R$ we define $M_s(\R_+^n)$ as the space of complex Radon measures $\mu$ on $\overline{\R_+^n}$ such that $x\mapsto e^{-\braket{\underline{s},x}}$ is integrable with respect to the total variation $\abs{\mu}$, where $\underline{s}=(s,\ldots,s)$. The Dunkl-Laplace transform on $M_s(\R_+^n)$ is defined as 
$$\mathcal{L}\mu(z) \coloneqq \int_{\R^n_+} E(-z,x) \; \mathrm{d}\mu(x), \quad z \in \C^n, \, \mathrm{Re}\, z>s,$$
where $\Re\, z>s$ is understood componentwise. We recall that by \cite[Lemma 3.1]{R20}, the Dunkl kernel satisfies for $z \in \C^n$ with $\Re\, z>s$
$$|E(-x,z)| \le e^{-\braket{\underline{s},x}}, \quad x \in \R_+^n.$$
Therefore, $\mathcal{L}\mu$ is a holomorphic function on $\set{\mathrm{Re}\, z>s}$.
\end{definition}

Furthermore, in \cite{R20}, a Dunkl-Laplace transform for tempered distributions $u$ with support contained in $\overline{\R_+^n}$ was defined by 
$$\mathcal{L}u(z)\coloneqq \braket{u,\tilde{E}(-z,\m)}, \quad \Re\, z>0,$$
where $\tilde{E}(-z,\cdot) \in \mathcal{S}(\R^n)$ and $\tilde{E}(-z,x)=E(-z,x)$ for $x \in \R_+^n$. In fact, $\mathcal{L}u$ is holomorphic, does not depend on the extension $\tilde{E}$ of the Dunkl kernel and satisfies
\begin{equation}\label{LaplaceTranslation}
\mathcal{L}(e^{-\braket{\underline{s},\m}}u)(z)=\mathcal{L}u(\underline{s}+z), \quad s>0.
\end{equation}
Moreover, the Dunkl-Laplace transform is injective in the following sense. If $\mathcal{L}u(\underline{s}+iy)=0$ for some $s>0$ and all $y \in \R^n$, then $u=0$.

\begin{lemma}\label{LaplaceMeasure}
Let $\mu,\nu \in M_s(\R_+^n)$. Then
\begin{enumerate}
\item[\rm (i)] $\mu*_k\nu$ exists and $e^{-\braket{\underline{s},\cdot}}(\mu*_k\nu) \in \mathscr{S}'(\R^n)$ with support contained in $\overline{\R_+^n}$.
\item[\rm (ii)] As holomorphic functions on $\set{\Re \, z>0}$.
$$\mathcal{L}(e^{-\braket{\underline{s},\cdot}}(\mu*_k\nu))(z)=\mathcal{L}\mu(\underline{s}+z) \cdot \mathcal{L}\nu(\underline{s}+z).$$
\end{enumerate}
\end{lemma}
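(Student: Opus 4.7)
The plan is to reduce both parts of the lemma to a Fubini calculation by exploiting that $\underline{s} = s(1,\ldots,1)$ lies in the orthogonal complement $\mathfrak{c} = \mathfrak{b}^\perp$ of the root span $\mathfrak{b} = \mathrm{span}_\R A_{n-1}$. Using \eqref{TranslationFreeDirection} applied to $e^{-\braket{\underline{s},\cdot}}\varphi$, a direct bookkeeping computation yields the factorization identity
\[
\tau_x(e^{-\braket{\underline{s},\cdot}}\varphi)(y) \;=\; e^{-\braket{\underline{s},x}}\, e^{-\braket{\underline{s},y}}\, \tau_x\varphi(y),
\]
valid for every $\varphi \in C^\infty(\R^n)$ and all $x,y \in \R^n$, because the exponential weight depends only on the $\mathfrak{c}$-component, on which $\tau$ reduces to ordinary translation.

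For part (i), since $\overline{\R_+^n}$ is a proper $\mathcal{S}_n$-invariant closed convex cone, the measures $\mu,\nu$ are $\mathcal{S}_n$-convolvable, so $\mu *_k \nu \in \mathcal{D}'(\R^n)$ exists, and Lemma \ref{Support}(iii) (applicable because $A_{n-1}$ is integral) gives $\supp(\mu *_k \nu) \subseteq \overline{\R_+^n}$. For $\varphi \in C_c^\infty(\R^n)$, plugging $e^{-\braket{\underline{s},\cdot}}\varphi$ into the definition of $*_k$ and applying the factorization identity yields
\[
\braket{e^{-\braket{\underline{s},\cdot}}(\mu*_k\nu),\varphi} \;=\; \int\!\!\int \tau_x\varphi(y)\, d\tilde\mu(x)\, d\tilde\nu(y),
\]
where $d\tilde\mu := e^{-\braket{\underline{s},\cdot}}d\mu$ and $d\tilde\nu := e^{-\braket{\underline{s},\cdot}}d\nu$ are \emph{finite} complex Radon measures on $\overline{\R_+^n}$ by the defining property of $M_s(\R_+^n)$. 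The inverse Dunkl transform combined with $|E(i\cdot,\cdot)|\le 1$ produces the uniform bound $|\tau_x\varphi(y)| \le c_k^{-1}\int|\widehat{\varphi}^{\,k}(z)|\omega(z)\,dz$, which is controlled by a Schwartz seminorm of $\varphi$; hence the right-hand side extends continuously to $\varphi \in \mathscr{S}(\R^n)$, forcing $e^{-\braket{\underline{s},\cdot}}(\mu*_k\nu) \in \mathscr{S}'(\R^n)$.

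For part (ii), I evaluate this extended functional at $\varphi = \tilde E(-z,\cdot) \in \mathscr{S}(\R^n)$. The standard multiplicativity $\tau_x E(-z,\cdot)(y) = E(-z,x)E(-z,y)$ follows from $V^{-1}E(-z,\cdot) = e^{-\braket{z,\cdot}}$ and the defining formula of $\tau$ via $V$. For $(x,y) \in \R_+^n \times \R_+^n$ the compact set $\mathrm{co}(W.x)+\mathrm{co}(W.y) \subseteq \R_+^n$ lies in the open set on which $\tilde E(-z,\cdot)$ and $E(-z,\cdot)$ agree, so Lemma \ref{Translation}(v) forces $\tau_x\tilde E(-z,\cdot)(y) = E(-z,x)E(-z,y)$ there; as both sides are continuous on $\R^n\times \R^n$, the identity extends by continuity to $\overline{\R_+^n}\times\overline{\R_+^n}$, which carries $\tilde\mu\otimes\tilde\nu$. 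Fubini then produces $\mathcal{L}\tilde\mu(z)\,\mathcal{L}\tilde\nu(z)$, and the identity $E(-\underline{s}-z,x) = e^{-\braket{\underline{s},x}}E(-z,x)$ (a further consequence of $\underline{s}\in\mathfrak{c}$) converts this into $\mathcal{L}\mu(\underline{s}+z)\,\mathcal{L}\nu(\underline{s}+z)$.

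The step I expect to be the main obstacle is the ``extension-independence'' identity $\tau_x\tilde E(-z,\cdot)(y) = \tau_x E(-z,\cdot)(y)$ on $\supp\tilde\mu\times\supp\tilde\nu$: a smooth function vanishing on the (closed) support of a distribution need not be annihilated by it, so one cannot read this off directly from $\tilde E = E$ on $\overline{\R_+^n}$. That is why I insist on first establishing the identity on the \emph{open} set $\R_+^n\times\R_+^n$ via Lemma \ref{Translation}(v) and then extending by continuity to the closure where the measures are actually supported.
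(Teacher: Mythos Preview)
Your argument is correct and follows essentially the same route as the paper: the factorization $\tau_x(e^{-\braket{\underline{s},\cdot}}\varphi)(y)=e^{-\braket{\underline{s},x+y}}\tau_x\varphi(y)$ via \eqref{TranslationFreeDirection}, the cone property of $\overline{\R_+^n}$ for existence and support, the uniform Schwartz bound on $\tau_x\varphi(y)$ through $\|\widehat{\varphi}^{\,k}\|_{L^1(\omega)}$, and the multiplicativity of the Dunkl kernel under translation for part~(ii). The only substantive difference is that in part~(ii) the paper simply invokes $(\tau_xE(-z,\cdot))(y)=E(-z,x)E(-z,y)$ together with \eqref{LaplaceTranslation} and leaves implicit the passage from the Schwartz extension $\tilde E(-z,\cdot)$ to $E(-z,\cdot)$; you make this explicit by using Lemma~\ref{Translation}(v) to localize the support of $f\mapsto\tau_xf(y)$ inside the open orthant when $x,y\in\R_+^n$, and then extend by continuity to the closed orthant. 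That extra step is a genuine clarification of a point the paper treats as routine, but it does not change the strategy.
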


\begin{proof}
\
\begin{enumerate}
\item Since the Dunkl transform is continuous on the Schwartz space, and \\
$|E(ix,y)|\le 1$ for $x,y \in \R^n$, we obtain
\begin{align*}
|\tau_xf(y)| &\le \frac{1}{c_k}||\widehat{f}^k||_{L^1(\R^n,\omega)} \le \tilde{C}||\braket{x}^{\tilde{N}}\widehat{f}||_{\infty} 
\le C||\braket{x}^{N}\partial^\alpha f||_{\infty},
\end{align*}
for some constants $C,\tilde{C}>0,$ $N,\tilde{N} \in \N$ and $\alpha \in \N_0^n$, all independent of $f$ and $x,y$. Moreover, since $\R\underline{1}=A_{n-1}^\perp$, we use \eqref{TranslationFreeDirection} to see that
$$\tau_x(e^{-\braket{\underline{s},\cdot}}f)(y)=e^{-\braket{\underline{s},x+y}} \cdot \tau_xf(y)$$
holds for all $s \in \R$, $f \in C^\infty(\R^n)$ and $x,y \in \R^n$. Since $\overline{\R_+^n}$ is a proper $\mathcal{S}_n$-invariant closed convex cone, Corollary \ref{DistConeAlg} shows that $\mu,\nu$ are $\mathcal{S}_n$-convolvable with  $\mathrm{supp}(\mu*_k \nu) \tm \overline{\R_+^n}$. Finally, we observe that
\begin{align*}
\braket{e^{-\braket{\underline{s},\cdot}}(\mu*_k\nu),\varphi} &= \braket{\mu \otimes \nu, \tau(e^{-\braket{\underline{s},\m}}\varphi)} \\
&= \int_{\overline{\R_+^n}}\int_{\overline{\R_+^n}} e^{-\braket{\underline{s},x+y}}(\tau_xf)(y) \; \mathrm{d}\mu(x)\mathrm{d}\nu(y),
\end{align*}
so that $e^{-\braket{\underline{s},\cdot}}(\mu*_k\nu) \in \mathscr{S}'(\R^n)$. 
\item This is an immediate consequence of $(\tau_x E(-z,\cdot))(y) = E(-z,x)E(-z,y)$ together with equation \eqref{LaplaceTranslation}.
\end{enumerate}
\end{proof}

\begin{proof}[Proof of Theorem 7.1]
By Theorem \ref{PropertiesDunklConvolution} and $\Delta(T)R_\mu=R_{\mu-1}$, we can assume without loss of generality, that $\mathrm{Re}\, \mu, \mathrm{Re}\, \nu > \mu_0$. In this case, $R_\mu,R_\nu \in M_s(\R_+^n)$ for all $s>0$. But, due to \cite[Theorem 5.9]{R20} we have
$$\mathcal{L}R_\alpha=\Delta^{-\alpha}.$$
Together with Lemma \ref{LaplaceMeasure}, this leads to
$$\mathcal{L}(e^{-\braket{\underline{s},\cdot}}(R_\mu*_kR_\nu))(z)=\Delta(\underline{s}+z)^\mu\Delta(\underline{s}+z)^\nu=\Delta(\underline{s}+z)^{\mu+\nu}=\mathcal{L}(e^{-\braket{\underline{s},\cdot}}R_{\mu+\nu})(z),$$
for all $s>0$ and $z \in \C^n$ with $\mathrm{Re}\, z>0$. Finally, the injectivity of the Dunkl-Laplace transform finishes the proof.
\end{proof}

\section*{Acknowledgments}
The author was supported by the Deutsche Forschungsgemeinschaft [RO 1264/4-1].


\begin{thebibliography}{9999}
\bibitem[ASS10]{ASS10} B. Amri, J.-P. Anker, M. Sifi, Three results in Dunkl theory. \emph{Colloq. Math.} 188 (2010), 299--312.
\bibitem[BF98]{BF98} T.H. Baker, P.J. Forrester, Non-symmetric Jack 
polynomials and integral kernels. \emph{Duke Math. J.} 95 (1998), 1--50.
\bibitem[BR23]{BR23} D. Brennecken, M. R\"osler,  Dunkl-Laplace transform of Jack polynomials and of hypergeometric functions. \emph{Trans. Amer. Math. Soc.} 376 (2023), 2419--2446.
\bibitem[DH19]{DH19} J. Dziuba\'{n}ski and A. Hejna, H\"ormander's multiplier theorem for the Dunkl transform, \emph{J. Funct. Anal.} 277 (2019), 2133--2159.
\bibitem[Du89]{Dun89} C.F. Dunkl, Differential-difference operators
associated to reflection groups. \emph{Trans. Amer. Math. Soc.} 311 (1989), 167--183.
\bibitem[DX14]{DX14} C. Dunkl, Y. Xu, Orthogonal polynomials of Several Variables, 2nd ed. \emph{Encyclopedia of Mathematics and its Applications}, vol. 155, Cambridge University Press, Cambridge, 2014.
\bibitem[FK94]{FK94} J. Faraut, A. Kor\'anyi, Analysis on Symmetric Cones. \emph{Oxford Mathematical Monographs}. Oxford Science Publications. Oxford University Press, the Clarendon Press, New York, 1994.
\bibitem[dJ93]{dJ93} M. de Jeu, The Dunkl transform. \emph{Invent. Math.} 113 (1993), 147--162. 
\bibitem[H03]{H03} L. H\"ormander, The Analysis of Linear Partial Differential Operators I. Distribution theory and Fourier analysis. Reprint of the second (1990) edition. \emph{Classics Math.}, Springer Verlag, Berlin 2003.
\bibitem[FJ98]{FJ98} G. Friedlander, M. Joshi, Introduction to The Theory of Distributions, 2nd ed. Cambridge University Press, Cambridge, 1998  
\bibitem[Ma13]{M13} I.G. Macdonald, Hypergeometric functions I. \emph{arXiv}: 1309.4568v1 (math.CA).
\bibitem[MT04]{MT04} H. Mejjaoli, K. Trim\`eche, Hypoellipticity and hypoanalyticity of the Dunkl Laplacian. \emph{Integral Transforms and Special Functions} 15 (2004), 523--548.
\bibitem[{\O}S05]{OS05} B. {\O}rsted, S.B. Said, The wave equation for Dunkl operators, \emph{Indag. Mathem.} 16 (2005), 351--391
\bibitem[R\"o99]{R99} M. R\"osler, Positivity of Dunkl's intertwining operator. \emph{Duke. Math. J.} 98 (1999), 445--463.
\bibitem[R\"o03]{R03} M. R\"osler, Dunkl operators: theory and applications. \emph{In: E. Koelink, W. van Assche (Eds.), Lecture Notes in Math.} vol. 1817 (2003), 93--136.
\bibitem[RV04]{RV04} M. R\"osler, M. Voit, Positivity of Dunkl's intertwining operator 
 via the trigonometric setting. \emph{Internat. Math. Res. Not. IMRN} 63 (2004), 3379--3389.
\bibitem[R07]{R07} M. R\"osler, Bessel convolutions on matrix cones. \emph{Comp. Math.} 143 (2007), 749--779.
\bibitem[Rö20]{R20} M. R\"osler, Riesz distributions and the Laplace transform in the Dunkl setting of type A. \emph{J. Funct. Anal.} 278 (2020), no 12, 108506, 29 pp. 
\bibitem[Tr01]{T01} K. Trim\`eche, The Dunkl Intertwining Operator in Spaces of Functions and Distributions and Integral Representation of its Dual. \emph{Integral Transform. Spec. Funct.} 12 (2001), 349--374.
\end{thebibliography}
\end{document}